%% file: nncg_paper.tex
\documentclass[11pt]{article}

\usepackage{amsmath, amssymb, amsthm}
\usepackage{booktabs}
\usepackage{microtype}
\usepackage{graphicx}
\usepackage{algorithm}
\usepackage{algpseudocode}
\usepackage{authblk}
\usepackage[margin=1in]{geometry}
\usepackage[colorlinks=true, linkcolor=blue, citecolor=blue, urlcolor=blue]{hyperref}
\usepackage{xcolor}

\newtheorem{theorem}{Theorem}[section]
\newtheorem{lemma}{Lemma}[section]
\newtheorem{proposition}{Proposition}[section]
\newtheorem{corollary}{Corollary}[section]

\title{Non-Negative Conjugate Gradients}

\author[1]{Thomas Schmelzer}
\author[2]{Martin Stoll}

\affil[1]{\small Jebel Quant Research, Abu Dhabi}
\affil[2]{\small Faculty of Mathematics, TU Chemnitz, Germany}

\date{\today}

\input{tables/nncg_defs}
\input{tables/nncg_bench_defs}

\input{tables/nncg_regu_defs}

\input{tables/nncg_deblur_defs}
\input{tables/nncg_hyperspectral_library_defs}
\input{tables/nncg_precond_defs}
\input{tables/nncg_frontier_defs}
\input{tables/nncg_certificate_defs}

\begin{document}

\maketitle

\input{sections/s0_abstract}

\input{sections/s1_introduction}
\input{sections/s2_problem}
\input{sections/s3_reduction}
\input{sections/s5_nonneg}
\input{sections/s6_conditioning}
\input{sections/s7_results}
\input{sections/s8_conclusions}
\section*{Acknowledgements}
During the preparation of this work, the authors used Claude in order to draft the initial structure of several sections and for creating suitable test-cases for the numerical experiments. After using this tool, the authors reviewed and edited the content as well as the codes needed and take full responsibility for the content of the publication.
\appendix
\input{sections/s9_appendix}

\bibliographystyle{siam}
\bibliography{../matrix_free/bib/refs}

\end{document}

%% file: tables/nncg_defs.tex
\newcommand{\nncgCGslope}{0.35}
\newcommand{\nncgCGrtwo}{0.995}

\newcommand{\nncgMaxOuter}{6}
\newcommand{\nncgMaxErr}{7e-10}
\newcommand{\nncgPGratio}{16}
\newcommand{\nncgRatioSlope}{0.38}
\newcommand{\nncgKappaMax}{10^{4}}
\newcommand{\nncgEqErr}{9e-09}

\newcommand{\nncgTrajTotal}{15}
\newcommand{\nncgRankOkErr}{1e-09}
\newcommand{\nncgRankConvLow}{4}
\newcommand{\nncgRankConvHigh}{0}
\newcommand{\nncgPcgKappa}{9e+04}
\newcommand{\nncgPcgCG}{2983}
\newcommand{\nncgPcgPCG}{326}
\newcommand{\nncgPcgRatio}{9}
\newcommand{\nncgPcgBase}{243}
\newcommand{\nncgFbSeeds}{60}
\newcommand{\nncgFbCycles}{10}

\newcommand{\nncgFbFired}{17}
\newcommand{\nncgFbMax}{22}

%% file: tables/nncg_bench_defs.tex
\newcommand{\nncgBenchASCGSens}{20.5}

\newcommand{\nncgBenchRatioN}{2000}
\newcommand{\nncgBenchLHvsASD}{48}
\newcommand{\nncgBenchIPvsASD}{21}

\newcommand{\nncgBenchShawN}{1024}
\newcommand{\nncgBenchShawLHvsASCG}{50}

%% file: tables/nncg_regu_defs.tex
\newcommand{\nncgReguN}{128}
\newcommand{\nncgReguNoise}{10^{-3}}
\newcommand{\nncgReguKappa}{10^{20}}
\newcommand{\nncgReguAlpha}{10^{-4}}
\newcommand{\nncgReguKappaReg}{10^{5}}
\newcommand{\nncgReguOuter}{3}
\newcommand{\nncgReguActive}{4}
\newcommand{\nncgReguSupp}{124}

\newcommand{\nncgReguErrLH}{2e-04}
\newcommand{\nncgPhilN}{512}
\newcommand{\nncgPhilActive}{273}
\newcommand{\nncgPhilTrueZeros}{256}

\newcommand{\nncgPhilErrLH}{1e-04}

%% file: tables/nncg_deblur_defs.tex
\newcommand{\nncgDeblurNside}{128}
\newcommand{\nncgDeblurDim}{16384}
\newcommand{\nncgDeblurOuter}{23}
\newcommand{\nncgDeblurInner}{4314}
\newcommand{\nncgDeblurActive}{8936}
\newcommand{\nncgDeblurDenseGB}{2.1}
\newcommand{\nncgDeblurPSNRblur}{17.8}
\newcommand{\nncgDeblurPSNR}{22.7}
\newcommand{\nncgDeblurMaxNside}{256}
\newcommand{\nncgDeblurMaxDim}{65536}
\newcommand{\nncgDeblurMaxTime}{2.3}
\newcommand{\nncgDeblurMaxMPRGPTime}{1.5}
\newcommand{\nncgDeblurMaxDenseGB}{34}
\newcommand{\nncgDeblurHeadNside}{64}
\newcommand{\nncgDeblurHeadDim}{4096}
\newcommand{\nncgDeblurHeadMFTime}{0.09}
\newcommand{\nncgDeblurHeadLHTime}{26.5}
\newcommand{\nncgDeblurHeadIPTime}{6.6}
\newcommand{\nncgDeblurHeadLHx}{285}
\newcommand{\nncgDeblurHeadIPx}{71}

%% file: tables/nncg_hyperspectral_library_defs.tex
\newcommand{\nncgHsLibPixels}{500}
\newcommand{\nncgHsLibBands}{188}
\newcommand{\nncgHsLibSignatures}{498}
\newcommand{\nncgHsLibRank}{188}
\newcommand{\nncgHsLibRidgeFrac}{1e-06}
\newcommand{\nncgHsLibOuterCold}{5802}
\newcommand{\nncgHsLibOuterWarm}{510}
\newcommand{\nncgHsLibFracSingle}{99.8}
\newcommand{\nncgHsLibSpeedupOuter}{11.4}
\newcommand{\nncgHsLibSpeedupWall}{672.3}
\newcommand{\nncgHsLibAgree}{9e-13}
\newcommand{\nncgHsLibSpeedupFista}{3.7}

\newcommand{\nncgHsLibSupportMean}{1.0}
\newcommand{\nncgHsLibSupportMax}{1}

%% file: tables/nncg_precond_defs.tex
\newcommand{\nncgPrecondN}{500}
\newcommand{\nncgPrecondRank}{4}
\newcommand{\nncgPrecondNSolves}{60}
\newcommand{\nncgPrecondCrossover}{2}
\newcommand{\nncgPrecondFinalSpeedup}{1.06}
\newcommand{\nncgPrecondNystromFinalMs}{7174.6}
\newcommand{\nncgPrecondGlobalFinalMs}{6773.8}
\newcommand{\nncgPrecondMaxError}{4.3e-11}

%% file: tables/nncg_frontier_defs.tex
\newcommand{\nncgFrontN}{494}
\newcommand{\nncgFrontK}{10}
\newcommand{\nncgFrontStart}{July 2021}
\newcommand{\nncgFrontEnd}{May 2026}
\newcommand{\nncgFrontSysFrac}{50}

\newcommand{\nncgFrontGammas}{60}
\newcommand{\nncgFrontHeldMin}{1}
\newcommand{\nncgFrontHeldMax}{60}
\newcommand{\nncgWarmColdOuter}{8.4}
\newcommand{\nncgWarmOuter}{1.6}
\newcommand{\nncgWarmStable}{31}
\newcommand{\nncgWarmStableTot}{59}

\newcommand{\nncgWarmX}{15}
\newcommand{\nncgWarmInnerX}{11.8}
\newcommand{\nncgWarmErr}{2e-10}
\newcommand{\nncgFrontMaxN}{20000}
\newcommand{\nncgFrontMaxGB}{3.2}
\newcommand{\nncgFrontMaxX}{72}

%% file: tables/nncg_certificate_defs.tex
\newcommand{\nncgCertN}{200}
\newcommand{\nncgCertSeeds}{5}
\newcommand{\nncgCertMinMargin}{10^{-8}}
\newcommand{\nncgCertASouter}{13}
\newcommand{\nncgCertASresid}{6.7\times10^{-14}}
\newcommand{\nncgCertMPerrHi}{37}
\newcommand{\nncgCertMPtolHi}{10^{-6}}

\newcommand{\nncgCertMPdualHi}{-1.1\times10^{-4}}
\newcommand{\nncgCertMPtolZero}{10^{-10}}
\newcommand{\nncgCertMPitZero}{294}

%% file: sections/s0_abstract.tex
\begin{abstract}

The conjugate gradient method (CG) solves a symmetric positive definite (SPD) system $Ax = b$, but on its own it does not respect $x \geq 0$. We develop \emph{non-negative conjugate gradients}, a solver for the bound-constrained quadratic
\[
  \min_{x}\; \tfrac{1}{2}\,x^\top A x - b^\top x
  \quad\text{subject to}\quad x \geq 0,\;\; Bx = c,
  \qquad A \succ 0,
\]
where the equality system $Bx = c$ is present only in the equality-augmented variant.
The solver wraps the CG method in a primal-dual active-set loop where each step solves the unconstrained system
over a set of \emph{free} variables, drops any variables that turn negative, re-admits any bound variable whose reduced gradient is negative, and restarts CG. These toggles are the
principal pivots of the linear complementarity problem $(A,\,{-b})$, and because $A$ is a $P$-matrix,
every pivot is well defined. For termination we use the block-principal-pivoting
construction of Júdice and Pires, a fast block-pivot path safeguarded by a least-index Bland fallback, which
reaches the unique global minimiser in finitely many outer steps on the strictly monotone problem,
even under degeneracy. Our contribution carries that guarantee to the inexact,
matrix-free regime the Krylov inner solve requires. Meaning that once the CG algorithm's residual is tied to the decision margin,
the approximate loop provably makes the same primal and dual sign decisions as the exact one, so finite termination transfers decision-for-decision. The guarantee attaches to the outer loop, so a direct factorisation may replace CG unchanged.

Each inner solve is matrix-free in the sense that when matrix vector products with $A = M^\top M$ are required, the operator $v \mapsto M^\top(M v)$
needs two products with $M$ and no $O(n^2)$ storage, and it retains CG's $O(\sqrt\kappa)$ convergence rate, where $\kappa = \kappa(A)$. In order to improve the convergence behvaihour we show that any regularisation or preconditioner, which compresses the spectrum lowers the iteration count. A ridge/Tikhonov split of the form
$A \mapsto (1-\alpha)A + \alpha R^\top R$ also secures the needed $P$-matrix property. A loop-free projected-gradient reference method, analysed alongside, forgoes the Krylov subspace and converges at the slower $O(\kappa)$ rate. On synthetic SPD problems the loop reaches the planted optimum in at most $\nncgMaxOuter{}$ outer steps, and with a direct free-set solve it
outpaces Lawson--Hanson and an interior-point solver by more than an order of magnitude on dense instances.
\end{abstract}

%% file: sections/s1_introduction.tex
\section{Introduction}

The conjugate gradient method (CG) of Hestenes and Stiefel~\cite{hestenes1952} is the
canonical Krylov solver for a symmetric positive definite (SPD) linear system $A x = b$;
see, e.g., Golub and Van Loan~\cite{golub2013}, Greenbaum~\cite{greenbaum1997}, or Trefethen
and Bau~\cite{trefethenbau1997}. It builds its iterates in the Krylov subspace
$\mathcal{K}_k(A, b)$ and converges at the $O(\sqrt\kappa)$ rate, where $\kappa = \kappa(A)$
is the spectral condition number (Proposition~\ref{prop:cg}). CG solves linear systems; it is \emph{not} designed for inequalities, that is, for constrained programs considered here. This paper applies CG within the context of solving the bound-constrained quadratic
\begin{equation}\label{eq:bqp}
  \min_{x \in \mathbb{R}^n}\; f(x) = \tfrac{1}{2}\,x^\top A x - b^\top x
  \quad\text{subject to}\quad x \geq 0,
  \qquad A \succ 0,
\end{equation}
without giving up its per-iteration cost or its Krylov convergence rate.

Problem~\eqref{eq:bqp} is the strictly convex non-negative quadratic program. When
$A = M^\top M$ is the Gram matrix of $M \in \mathbb{R}^{m\times n}$ and $b = M^\top d$, its minimiser solves the non-negative least-squares (NNLS) problem
$\min_{x\geq 0}\|Mx - d\|_2^2$, the setting consider in Lawson and Hanson~\cite{lawson1974}. Without the constraint, the minimiser is simply $x = A^{-1}b$, which is exactly what CG computes. The constraint $x \geq 0$ is what stands between that clean single solve and the answer to our problem. We also treat
the \emph{equality-augmented} variant, in which a linear equality system $Bx = c$ (with
$B \in \mathbb{R}^{p\times n}$ of full row rank) is imposed alongside $x \geq 0$. This variant
arises whenever the solution must satisfy a family of linear balance conditions, the single
normalisation $\mathbf{1}^\top x = \beta$ being the case $p=1$. Its multipliers
$\lambda \in \mathbb{R}^p$ can be eliminated analytically through a $p\times p$ Schur
complement (cf.\ Section~\ref{sec:reduction}).

This paper studies the constraint-handling layer directly, and its central contribution is a finite \emph{termination guarantee}. The method of our choice is a \emph{primal-dual active-set} outer
loop~\cite{nocedal2006} that wraps around CG. It solves the unconstrained system over a working set
of \emph{free} variables, releases any variable that returns negative (the primal step),
re-admits any bound variable whose reduced gradient is negative (the dual step), and restarts
CG on the modified free set each time. The variable toggles are precisely the \emph{principal
pivots} of the linear complementarity problem (LCP) $(A,\,{-b})$. Since $A \succ 0$ makes it a
$P$-matrix, every principal submatrix is invertible and each pivot is well
defined~\cite{murty1988,judice1994,kim2011}. Guarding a fast block-pivot path with a
least-index Bland-style fallback yields finite termination at the unique global minimiser
even under degeneracy, the block-principal-pivoting construction of Júdice and
Pires~\cite{judice1994} (Theorem~\ref{thm:termination}). What we add is its transfer to the
inexact CG solves the loop actually performs: a perturbation lemma shows that
once the residual is small against the problem's decision margin, the inexact loop visits
exactly the free sets of the exact one (Lemma~\ref{lem:inexact}). Combining this guarantee with a Krylov inner solver on a changing free set turns it into a fast solver in practice. In concrete terms, CG restarts on each reduced system and warm-starts across a parametric family of nearby problems.

The inner solve preserves two properties that matter for computational efficiency. First, it is
\emph{matrix-free}, i.e. when $A = M^\top M$ is applied, the action
$v \mapsto A_{\mathcal{F}}v = M_{\mathcal{F}}^\top(M_{\mathcal{F}}v)$ on the free set $\mathcal{F}$ is evaluated as two products with $M_{\mathcal{F}}$, so $A$ is \emph{never}
assembled and no $O(n^2)$ storage is required. Second, its iteration count is governed by the condition number $\kappa$, so any preconditioner that compresses the spectrum accelerates
convergence. A ridge/Tikhonov splitting $A \mapsto A_\alpha = (1-\alpha)A + \alpha R^\top R$
with $R^\top R \succ 0$ does both. It lowers $\kappa$ (Proposition~\ref{prop:regcond}), and,
by keeping every principal submatrix strictly positive definite, it secures the $P$-matrix
property that the finite-termination proof relies on (Section~\ref{sec:conditioning}).

For contrast we analyse a loop-free reference method that enforces the constraint at every
step. This \emph{projected-gradient} method takes a gradient step and projects onto the
feasible set. The feasible set is the non-negative orthant for~\eqref{eq:bqp}, or the simplex
slice $\Delta_\beta = \{x : \mathbf{1}^\top x = \beta,\, x \geq 0\}$ for the
single-normalisation case, projected onto in $O(n\log n)$ by a sort-and-threshold pass
(cf.\ \cite{duchi2008,condat2016}). The method is single-stage and simple. It is not a Krylov
subspace method, it does not orthogonalise residuals against prior search directions, and it converges at the $O(\kappa)$ rate, a factor $\sqrt\kappa$ slower than the CG inner loop (Section~\ref{sec:nonneg}).

Both building blocks are individually classical. One is an active-set method and a block-principal-pivoting method for the non-negative and complementarity
problems~\cite{lawson1974,judice1994,kim2011,murty1988,nocedal2006}. The other is CG with preconditioning for SPD systems~\cite{hestenes1952,golub2013,greenbaum1997,trefethenbau1997,%
benzi2005}. What is \emph{not} classical is carrying that termination guarantee to inexact, matrix-free inner solves. Pure block principal pivoting terminates only under a non-degeneracy
assumption; Júdice and Pires' least-index fallback already removes that requirement for exact solves~\cite{judice1994}, and Lemma~\ref{lem:inexact} shows the guarantee survives the inexact CG solves the matrix-free regime requires (Theorem~\ref{thm:termination}). The synthetic study of
Section~\ref{sec:results} then shows this unconditional guarantee to be dormant on generic data yet genuinely necessary meaning that on an adversarial family of anti-correlated designs the
unguarded batch path cycles, and only the fallback terminates it. Wrapping this guaranteed
loop around a matrix-free Krylov inner solver, warm-started across a parametric sweep, is the
approach that makes the guaranteed method a fast one as well. We develop and analyse it
here on its own, independent of any application.\footnote{A reference implementation is
released as the open-source package \texttt{nncg}:
\url{https://github.com/Jebel-Quant/nncg}. Its continuous-integration test suite is the
synthetic study of Section~\ref{sec:results}.}

Applying CG through a changing set of free variables has a lineage of its own, and the distinction from it is worth drawing precisely. Bertsekas' projected Newton
method~\cite{bertsekas1982}, Mor\'e and Toraldo's GPCG~\cite{moretoraldo1991}, and Dost\'al's
proportioning and MPRGP algorithms~\cite{dostal1997,dostalschoberl2005} are
\emph{feasible-point} methods. Every iterate satisfies the bounds, and the working face
changes through gradient projections. The switches between projection and CG phases are
governed by sufficient-decrease or proportioning tests. Their convergence rates are stated in
terms of $\kappa$, and they identify the optimal face finitely under a non-degeneracy
(strict-complementarity) assumption. They are moreover \emph{bound-constrained} solvers: a
general equality $Bx = c,$ a budget or factor-neutrality condition in the portfolio
application, breaks the closed-form orthant projection they rely on, and is reached only by
wrapping them in an outer augmented-Lagrangian loop~\cite{dostal2006smalbe}, at the price of an
extra level of iteration and of the exact certificate. The loop developed here is instead a
\emph{complementary-basis} method. Its iterates are infeasible between outer steps, the free
set changes by principal pivots driven by signs alone with no line search or decrease test, and
it takes $Bx = c$ directly through the Schur-complement elimination of
Section~\ref{sec:reduction}. This loop adds the equality $Bx = c$ but
treats only the non-negative orthant $x \geq 0$ (equivalently a single lower bound per variable), whereas the feasible-point family handles general two-sided box constraints
$\ell \leq x \leq u$ that our two-state complementary basis does not express. The termination is of combinatorial nature as it rests on Murty's least-index rule, which is what removes the non-degeneracy hypothesis. On the NNLS side, Bro and De Jong's fast NNLS~\cite{brodejong1997} accelerates Lawson--Hanson by caching normal-equations cross-products, but it still assembles $M^\top M$ and solves each working-set system by dense factorisation. The present method forms neither, reaching $A$ only through the two operator
applications of Section~\ref{sec:reduction}.

We briefly summarise the main structure of this paper. Section~\ref{sec:problem} states the problem and its KKT/complementarity structure.
Section~\ref{sec:reduction} reduces it to a sequence of unconstrained SPD solves, records the
matrix-free CG operator and its convergence rate, and eliminates the equality multipliers
analytically. Section~\ref{sec:nonneg} develops the active-set loop and the projected-gradient
reference method and proves finite termination. Section~\ref{sec:conditioning} treats
regularisation and preconditioning. Section~\ref{sec:results} records the complexity of each
method and discusses its behaviour on synthetic SPD test problems.

%% file: sections/s2_problem.tex
\section{Problem and Complementarity Structure}\label{sec:problem}

We consider the strictly convex non-negative quadratic program~\eqref{eq:bqp},
\[
  \min_{x \geq 0}\; f(x) = \tfrac{1}{2}\,x^\top A x - b^\top x,
  \qquad A = A^\top \succ 0,\quad b \in \mathbb{R}^n,
\]
together with its two structural specialisations. In the \emph{least-squares} form
$A = M^\top M$ and $b = M^\top d$ for $M \in \mathbb{R}^{m\times n}$ of full column rank, so
that $f(x) = \tfrac{1}{2}\|Mx - d\|_2^2 - \tfrac{1}{2}\|d\|_2^2$ and~\eqref{eq:bqp} is
NNLS~\cite{lawson1974}. In the \emph{equality-augmented} form a linear equality
system $Bx = c$ is imposed as well,
\begin{equation}\label{eq:eqp}
  \min_{x}\; \tfrac{1}{2}\,x^\top A x - b^\top x
  \quad\text{subject to}\quad Bx = c,\; x \geq 0,
  \qquad B \in \mathbb{R}^{p\times n},\; c \in \mathbb{R}^p,
\end{equation}
with $B$ of full row rank $p \leq n$. This constrains $x$ to the polytope
$\mathcal{P} = \{x \geq 0 : Bx = c\}$, an affine slice of the non-negative
orthant. The single normalisation $\mathbf{1}^\top x = \beta$ is the case $p=1$,
$B = \mathbf{1}^\top$, $c = \beta$. A general $B$ carries any finite family of
linear balance conditions (group budgets, factor neutralities, netting rules)
at the same cost structure.

\paragraph{Optimality and complementarity.}
Because $f$ is strictly convex and the feasible set of~\eqref{eq:bqp} is a non-empty closed
convex cone, a unique minimiser exists and the Karush-Kuhn-Tucker (KKT) conditions are
necessary and sufficient. We introduce a multiplier $s \geq 0$ for $x \geq 0$. Stationarity of
the Lagrangian $\tfrac{1}{2}x^\top A x - b^\top x - s^\top x$ gives the reduced gradient
$s = \nabla f(x) = A x - b$, and the KKT system is the linear complementarity problem
$\mathrm{LCP}(A,\,{-b})$:
\begin{equation}\label{eq:lcp}
  s = A x - b, \qquad x \geq 0, \qquad s \geq 0, \qquad x_i\,s_i = 0 \ \ \forall i.
\end{equation}
Complementary slackness partitions the indices into a \emph{free} set
$\mathcal{F} = \{i : x_i > 0\}$, on which $s_i = 0$ and hence $(Ax)_i = b_i$, and a
\emph{bound} set $\mathcal{B} = \{i : x_i = 0\}$, on which $s_i = (Ax)_i - b_i \geq 0$. A
bound variable with $s_i < 0$ would lower $f$ if released to a small positive value. We point out that a free
variable with $x_i < 0$ is infeasible and must be pushed to the bound. These two violations
drive the primal and dual steps of Section~\ref{sec:nonneg}.

For the equality-augmented problem~\eqref{eq:eqp} the Lagrangian carries an additional term
$-\lambda^\top(Bx - c)$ with a multiplier $\lambda \in \mathbb{R}^p$. The reduced gradient
becomes $s = A x - b - B^\top\lambda$, and the free-set stationarity reads
$(Ax)_i = b_i + (B^\top\lambda)_i$ for $i \in \mathcal{F}$. On the free set the gradient is
thus pinned to the row space of $B$, with $\lambda$ the vector of shadow prices of the $p$
balance conditions. The KKT system is now a \emph{mixed} complementarity problem, i.e. the free
block carries the equality $B_{\mathcal{F}}x_{\mathcal{F}} = c$ alongside the sign conditions.
Section~\ref{sec:reduction} shows that $\lambda$ need not be carried as an unknown in the
linear solve. It is recovered in closed form from $p+1$ SPD solves and a $p\times p$ Schur
system (for $p=1$, the two solves of the single-normalisation case). This requires
$B_{\mathcal{F}}$ to have full row rank. That holds whenever the free set is large enough,
$|\mathcal{F}| \geq p$, and generically thereafter; for $p=1$ it holds for every non-empty
free set.

\paragraph{The $P$-matrix property.}
Since $A \succ 0$, every principal submatrix denoted by $A_{\mathcal{F}} := A_{\mathcal{F},\mathcal{F}},$ where $\mathcal{F}$ is any index set, 
is itself SPD as $x_{\mathcal{F}}^\top A_{\mathcal{F}}\,x_{\mathcal{F}} = x^\top A x > 0$ for any
nonzero $x$ supported on $\mathcal{F}$. It is therefore invertible with
$\det A_{\mathcal{F}} > 0$, so the linear system
$A_{\mathcal{F}}\,x_{\mathcal{F}} = b_{\mathcal{F}}$ solved on any candidate free set is well
posed and CG applies at every outer step (Section~\ref{sec:reduction}). This much is
elementary and needs nothing but symmetry and definiteness of a principal submatrix.

The $P$-matrix label enters for one further purpose. A matrix all of whose principal minors
are positive is a $P$-matrix~\cite{murty1988}, and $A \succ 0$ is one; the point of the label
is that $\mathrm{LCP}(A,\,{-b})$ with a $P$-matrix has a unique solution reachable by principal
pivoting~\cite{murty1988,judice1994}, the backbone of the finite-termination guarantee in
Section~\ref{sec:nonneg}. When $A$ is only positive semidefinite (as in a rank-deficient
least-squares problem, $m < n$), the regularisation of Section~\ref{sec:conditioning} restores
$A \succ 0$ and with it the $P$-matrix property.

The complementarity structure~\eqref{eq:lcp} reduces the search for $x$ to a search over free
sets $\mathcal{F}$. Once $\mathcal{F}$ is fixed, the sign constraints drop out, and what
remains on $\mathcal{F}$ is exactly the linear system~\eqref{eq:innerspd} shown well posed
above. Section~\ref{sec:reduction} makes this reduction precise, together with the closed-form
elimination of $\lambda$ for the equality-augmented problem~\eqref{eq:eqp}.

%% file: sections/s3_reduction.tex
\section{Reduction to Unconstrained SPD Solves}\label{sec:reduction}

We now fix a candidate free set $\mathcal{F} \subseteq \{1,\dots,n\}$ and set $x_i = 0$ for
$i \notin \mathcal{F}$. On $\mathcal{F}$ the bound-constrained problem~\eqref{eq:bqp} reduces
to the \emph{unconstrained} strictly convex quadratic
\[
  \min_{x_{\mathcal{F}}}\; \tfrac{1}{2}\,x_{\mathcal{F}}^\top A_{\mathcal{F}}\,x_{\mathcal{F}}
    - b_{\mathcal{F}}^\top x_{\mathcal{F}},
\]
whose stationarity condition is the SPD linear system
\begin{equation}\label{eq:innerspd}
  A_{\mathcal{F}}\,x_{\mathcal{F}} = b_{\mathcal{F}}.
\end{equation}
The active-set loop of Section~\ref{sec:nonneg} solves a sequence of such systems, one per
outer step, and the inequality $x \geq 0$ is imposed by which indices $\mathcal{F}$ contains,
never inside the linear system solve. The inner solver is the proposed CG method.

\paragraph{Matrix-free operator.}
It is well known that CG needs only the mat-vec $v \mapsto A_{\mathcal{F}}\,v$, never the explicit matrix. For
$A = M^\top M$ this is $M_{\mathcal{F}}^\top(M_{\mathcal{F}}\,v)$ with
$M_{\mathcal{F}} = M_{:,\mathcal{F}}$, two products at $O(m\,|\mathcal{F}|)$ and no $O(n^2)$
storage, the standard CG-on-the-normal-equations
arrangement~\cite{golub2013,paige1975,choi2006} on a column subset that changes between outer
steps.

The whole method requires only two operations of $A$, and this mat-vec is the first. The
\emph{free-block application} $v \mapsto A_{\mathcal{F}}v$ is all CG needs to
solve~\eqref{eq:innerspd}. The \emph{cross product}
$x_{\mathcal{F}} \mapsto A_{\mathcal{B},\mathcal{F}}\,x_{\mathcal{F}}$ supplies the reduced
gradient $s_i = (Ax)_i - b_i$ at the bound indices $i \in \mathcal{B}$, which the
dual-feasibility test of Section~\ref{sec:nonneg} uses (since $x_{\mathcal{B}} = 0$). A full
product $x \mapsto Ax$ is convenient for a residual check but not otherwise needed. Any implementation realising these two operations runs the loop, its termination proof, and its
convergence analysis unchanged. A dense $A$ slices them directly. The Gram operator
$A = M^\top M$ evaluates them from $M$ without ever forming $A$, as above. A
diagonal-plus-low-rank operator $A = \operatorname{diag}(d) + U\Delta U^\top$ inverts the free
block by the Woodbury identity, a \emph{direct} inner solve that bypasses CG. The
equality-augmented problem~\eqref{eq:eqp} adds only the column selections
$v \mapsto B_{\mathcal{F}}v$ and $w \mapsto B_{\mathcal{F}}^\top w$ of the Schur elimination
below. When a variable is released in the primal step, the operator is rebuilt on the reduced
free set and CG restarts on the smaller system. We now state the standard convergence result
for CG.

\begin{proposition}[CG convergence~{\cite{greenbaum1997,trefethenbau1997}}]\label{prop:cg}
  Let $A_{\mathcal{F}}$ be SPD with spectral condition number
  $\kappa = \kappa(A_{\mathcal{F}})$, and let $x_k$ denote the $k$-th CG iterate
  for~\eqref{eq:innerspd}. Then
  \[
    \frac{\lVert e_k\rVert_{A_{\mathcal{F}}}}{\lVert e_0\rVert_{A_{\mathcal{F}}}}
    \;\leq\;
    2\!\left(\frac{\sqrt{\kappa}-1}{\sqrt{\kappa}+1}\right)^{\!k},
  \]
  where $\lVert e\rVert_{A_{\mathcal{F}}} = (e^\top A_{\mathcal{F}}\,e)^{1/2}$ is the
  $A_{\mathcal{F}}$-energy norm of the error $e = x_{\mathcal{F}} - x_k$.
\end{proposition}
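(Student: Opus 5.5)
The plan is the classical polynomial-approximation argument for Krylov methods, applied to the fixed reduced system~\eqref{eq:innerspd}. Write $A := A_{\mathcal{F}}$, and let $x_\star = A^{-1} b_{\mathcal{F}}$ be the exact solution. Let $x_0$ be the (possibly warm) starting vector and $r_0 = b_{\mathcal{F}} - A x_0$ its residual.

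\textbf{Step 1 (optimality of CG).} First we record that the $k$-th CG iterate $x_k$ is the unique minimiser of the energy norm $\lVert x_\star - x\rVert_{A}$ over the affine space $x_0 + \mathcal{K}_k(A, r_0)$. This follows from the Galerkin condition $r_k \perp \mathcal{K}_k(A, r_0)$, which CG maintains by construction through its $A$-conjugate search directions. Equivalently, the error can be written as $e_k = p_k(A) e_0$ where
\[
  \lVert e_k\rVert_{A} \;=\; \min_{p \in \Pi_k,\; p(0)=1} \lVert p(A)\, e_0\rVert_{A},
\]
and $\Pi_k$ denotes the polynomials of degree at most $k$.

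\textbf{Step 2 (spectral reduction).} Next we diagonalise $A = Q\Lambda Q^\top$ with eigenvalues $0 < \lambda_{\min} \le \lambda_i \le \lambda_{\max}$. Expanding $e_0$ in the eigenbasis gives
\[
  \lVert p(A) e_0\rVert_{A}^2 = \sum_i \lambda_i\, p(\lambda_i)^2\, c_i^2 ,
\]
where $c_i$ are the coefficients of $e_0$. Bounding each $p(\lambda_i)^2$ by its maximum over the spectrum yields
\[
  \frac{\lVert e_k\rVert_{A}}{\lVert e_0\rVert_{A}} \;\le\; \min_{p(0)=1}\; \max_{\lambda\in[\lambda_{\min},\lambda_{\max}]} |p(\lambda)| .
\]

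\textbf{Step 3 (Chebyshev choice).} Take the shifted and scaled Chebyshev polynomial
\[
  p(\lambda) = \frac{T_k\!\left(\dfrac{\lambda_{\max}+\lambda_{\min}-2\lambda}{\lambda_{\max}-\lambda_{\min}}\right)}{T_k\!\left(\dfrac{\kappa+1}{\kappa-1}\right)} .
\]
It satisfies $p(0)=1$ and $\max |p| = 1/T_k\bigl((\kappa+1)/(\kappa-1)\bigr)$ on the interval, since $|T_k|\le 1$ on $[-1,1]$.

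\textbf{Step 4 (evaluating the Chebyshev denominator).} This is the only computational point, and I expect it to be the main obstacle. Use $T_k(z) = \tfrac12\bigl[(z+\sqrt{z^2-1})^k + (z-\sqrt{z^2-1})^k\bigr]$ for $z\ge1$, which gives $T_k(z) \ge \tfrac12 (z+\sqrt{z^2-1})^k$. With $z=(\kappa+1)/(\kappa-1)$ one checks that
\[
  z+\sqrt{z^2-1} = \frac{\kappa+1+2\sqrt\kappa}{\kappa-1} = \frac{\sqrt\kappa+1}{\sqrt\kappa-1}.
\]
Inverting this lower bound on $T_k$ gives the factor $2\bigl((\sqrt\kappa-1)/(\sqrt\kappa+1)\bigr)^k$, which is the claimed estimate.

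The case $\kappa=1$ is trivial: $A$ is a multiple of the identity, CG converges in one step, and the bound reads $0$ for $k \ge 1$. Finally, since $\mathcal{F}$ is fixed throughout the inner solve and Section~\ref{sec:problem} shows $A_{\mathcal{F}}$ is SPD for any $\mathcal{F}$, the argument applies verbatim at every outer step.
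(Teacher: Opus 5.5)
Your argument is correct and is the standard proof: CG's energy-norm optimality over $x_0+\mathcal{K}_k(A_{\mathcal{F}},r_0)$, reduction to a min--max polynomial problem on $[\lambda_{\min},\lambda_{\max}]$, and the shifted Chebyshev polynomial with $z+\sqrt{z^2-1}=(\sqrt\kappa+1)/(\sqrt\kappa-1)$. The paper gives no proof of its own and only cites Greenbaum and Trefethen--Bau, whose proofs follow this same route; your separate treatment of $\kappa=1$ is a harmless extra detail.
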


Reaching a fixed relative energy-norm tolerance thus takes
$O(\sqrt\kappa)$ iterations, and clustered spectra, with a minimal condition number do better, the
superlinear phase the synthetic study of Section~\ref{sec:results} measures. Because
$A \succ 0$ forces every $A_{\mathcal{F}} \succ 0$, the bound applies at every outer step, and
the regularising split of Section~\ref{sec:conditioning} lowers $\kappa$. This $\sqrt\kappa$
dependence is the whole reason to keep a Krylov inner solver, while the projected-gradient reference
of Section~\ref{sec:nonneg} pays $O(\kappa)$ at the same per-step cost.

\paragraph{Eliminating the equality multipliers.}
For the equality-augmented problem~\eqref{eq:eqp}, stationarity and feasibility on the free
set are the $(|\mathcal{F}|+p)\times(|\mathcal{F}|+p)$ saddle-point system (cf.~\cite{benzi2005})
\begin{equation}\label{eq:saddle}
  \begin{pmatrix} A_{\mathcal{F}} & -B_{\mathcal{F}}^\top \\ B_{\mathcal{F}} & 0 \end{pmatrix}
  \begin{pmatrix} x_{\mathcal{F}} \\ \lambda \end{pmatrix}
  =
  \begin{pmatrix} b_{\mathcal{F}} \\ c \end{pmatrix},
  \qquad B_{\mathcal{F}} = B_{:,\mathcal{F}} \in \mathbb{R}^{p\times|\mathcal{F}|}.
\end{equation}
Rather than solving the indefinite system directly, we eliminate $\lambda$ using the Schur
complement. Stationarity gives $x_{\mathcal{F}} = A_{\mathcal{F}}^{-1}(b_{\mathcal{F}} +
B_{\mathcal{F}}^\top\lambda) = v_0 + V_1\lambda$ in terms of the SPD solves
\[
  v_0 = A_{\mathcal{F}}^{-1} b_{\mathcal{F}} \in \mathbb{R}^{|\mathcal{F}|},
  \qquad
  V_1 = A_{\mathcal{F}}^{-1} B_{\mathcal{F}}^\top \in \mathbb{R}^{|\mathcal{F}|\times p}
  \quad(\text{its $p$ columns, one CG solve each}).
\]
Imposing $B_{\mathcal{F}}x_{\mathcal{F}} = c$ then fixes the multiplier through the $p\times p$
system
\[
  S\,\lambda = c - B_{\mathcal{F}} v_0,
  \qquad
  S = B_{\mathcal{F}} V_1 = B_{\mathcal{F}} A_{\mathcal{F}}^{-1} B_{\mathcal{F}}^\top \succ 0,
  \qquad
  x_{\mathcal{F}} = v_0 + V_1\lambda,
\]
where $S$, the Schur complement, is SPD because $A_{\mathcal{F}} \succ 0$ and $B_{\mathcal{F}}$
has full row rank. The Lagrange multiplier is then $\lambda = S^{-1}(c - B_{\mathcal{F}}v_0)$,
and the $p\times p$ solve can be done by a dense Cholesky decomposition at negligible cost.

The single-normalisation case $p=1$, $B=\mathbf{1}^\top$, $c=\beta$ recovers the
two solves $v_0, v_1 = A_{\mathcal{F}}^{-1}\mathbf{1}$ with the scalar
$\lambda = (\beta - \mathbf{1}^\top v_0)/(\mathbf{1}^\top v_1)$. The pure-normalisation case
$b=0$, $\beta=1$ collapses further to the single solve
$x_{\mathcal{F}} = v_1/(\mathbf{1}^\top v_1)$.
Thus each outer step costs $p+1$ matrix-free CG solves. The multipliers are recovered
analytically and never appear in the linear solve, so every inner system stays SPD and
directly amenable to Proposition~\ref{prop:cg}. The $p+1$ right-hand sides share the operator
$A_{\mathcal{F}}$, and can be solved by a block-CG or by independent CG runs, warm-started
across outer steps.

%% file: sections/s5_nonneg.tex
\section{Enforcing Non-Negativity}\label{sec:nonneg}

Section~\ref{sec:reduction} reduces each candidate free set to an SPD solve, leaving
$x \geq 0$ as the sole remaining constraint. Two strategies enforce it. The first is an
active-set loop. It wraps the CG solver unchanged and certifies global optimality through the
complementarity system~\eqref{eq:lcp}. The second is projected gradient. It handles the
constraint inside every step but forgoes the Krylov space convergence rate of CG.

\subsection{Active-set / block-principal-pivoting loop}\label{ssec:activeset}

The active-set method maintains a free set $\mathcal{F}$, solves~\eqref{eq:innerspd} (or its
equality-augmented form) on $\mathcal{F}$ by CG, and adjusts $\mathcal{F}$ toward
feasibility. In the \emph{primal step} any free variable that returned negative is pushed to
the bound and dropped from $\mathcal{F}$. Once all free variables are non-negative, the
\emph{dual step} inspects the reduced gradient $s = Ax - b$ (minus $B^\top\lambda$ in the
equality-augmented case, with $\lambda$ from Section~\ref{sec:reduction}) at every bound
variable. A bound index $i$ with $s_i < 0$ would lower $f$ if released, so it is re-admitted
to $\mathcal{F}$. The loop terminates when no variable violates either test, that is, when
$x \geq 0$, $s \geq 0$, and complementary slackness hold to tolerance. These are exactly the
KKT system~\eqref{eq:lcp}, which for the strictly convex $f$ certifies the unique global
minimiser.

Toggling an index between $\mathcal{F}$ and $\mathcal{B}$ is a \emph{principal pivot} on
$\mathrm{LCP}(A,\,{-b})$, and dropping or adding several indices at once is a \emph{block}
principal pivot~\cite{judice1994,kim2011}. Because $A \succ 0$ is a $P$-matrix, every
principal submatrix is invertible and each pivot is well defined~\cite{murty1988}. Block
pivots are fast, but, like any all-at-once exchange, they can cycle: a batch drop can
over-shoot the support and a later batch add restore it, revisiting a free set already seen.
Algorithm~\ref{alg:elim} therefore runs the batch exchange as a \emph{fast path} guarded by an
anti-cycling counter. Let $N$ be the current number of primal and dual violators and $\bar N$
the fewest seen so far. A batch step is taken whenever it makes progress ($N < \bar N$) or a
patience budget $p$ is unspent. Otherwise the algorithm falls back to a single
\emph{Bland-style} pivot on the lowest-indexed violator. This least-index single pivot is
Murty's principal pivoting method, which cannot cycle~\cite{murty1988}. It is what the
finite-termination guarantee rests on. The batch path serves only to reach the optimum faster
when it does not stall. This is the anti-cycling construction of J\'udice and Pires for block
principal pivoting~\cite{judice1994}.

\begin{algorithm}[ht]
\caption{Active-set outer loop (wraps the CG inner solver $f$)}\label{alg:elim}
\begin{algorithmic}[1]
\Require Inner solver $f$ returning $(x_{\mathcal{F}}, k_{\mathrm{step}})$ for free set $\mathcal{F}$;\;
         SPD $A$ (or its mat-vec) and $b$;\; tolerance $\varepsilon$;\;
         patience $p_{\max} \in \mathbb{N}$ \textit{(default $3$)}
\Ensure Minimiser $x \in \mathbb{R}^n$ of~\eqref{eq:bqp};\; outer steps $s$;\; total inner iterations $k$
\State $\mathcal{F} \leftarrow \{1,\ldots,n\}$;\; $s \leftarrow 0$;\; $k \leftarrow 0$;\;
       $\bar N \leftarrow n+1$;\; $p \leftarrow p_{\max}$
\Loop
  \State $(x_{\mathcal{F}},\, k_{\mathrm{step}}) \leftarrow f(\mathcal{F})$;\;
         $s \leftarrow s + 1$;\; $k \leftarrow k + k_{\mathrm{step}}$
    \hfill\textit{($k_{\mathrm{step}}$: CG iterations; $1$ for a direct inner solve)}
  \State $x_i \leftarrow (x_{\mathcal{F}})_i$ for $i \in \mathcal{F}$;\; $x_i \leftarrow 0$ for $i \notin \mathcal{F}$
  \State $s_i \leftarrow (Ax)_i - b_i$
    \hfill\textit{(reduced gradient; subtract $(B^\top\lambda)_i$ in the equality-augmented case)}
  \State $\mathcal{D} \leftarrow \{i \in \mathcal{F} : x_i < -\varepsilon\}$;\;
         $\mathcal{V} \leftarrow \{i \notin \mathcal{F} : s_i < -\varepsilon\}$
    \hfill\textit{(primal and dual violators)}
  \State $\mathcal{H} \leftarrow \mathcal{D} \cup \mathcal{V}$;\; $N \leftarrow |\mathcal{H}|$
  \If{$N = 0$}
    \State \textbf{break} \hfill\textit{(KKT~\eqref{eq:lcp} satisfied; globally optimal)}
  \ElsIf{$N < \bar N$ \textbf{ or } $p > 0$}
    \hfill\textit{(fast path: progress, or patience remains)}
    \State \textbf{if} $N < \bar N$ \textbf{ then } $\bar N \leftarrow N$;\; $p \leftarrow p_{\max}$ \textbf{ else } $p \leftarrow p - 1$
    \State $\mathcal{F} \leftarrow (\mathcal{F} \setminus \mathcal{D}) \cup \mathcal{V}$
      \hfill\textit{(batch exchange: drop all $\mathcal{D}$, add all $\mathcal{V}$)}
  \Else
    \hfill\textit{(anti-cycling fallback: $p$ exhausted)}
    \State $i^\star \leftarrow \min\{i : i \in \mathcal{H}\}$
      \hfill\textit{(Bland least-index rule)}
    \State \textbf{if} $i^\star \in \mathcal{D}$ \textbf{ then } $\mathcal{F} \leftarrow \mathcal{F} \setminus \{i^\star\}$
           \textbf{ else } $\mathcal{F} \leftarrow \mathcal{F} \cup \{i^\star\}$
      \hfill\textit{(single principal pivot)}
  \EndIf
\EndLoop
\State \Return $x$, $s$, $k$
\end{algorithmic}
\end{algorithm}

\begin{theorem}[Finite convergence of Algorithm~\ref{alg:elim}]\label{thm:termination}
Let $f(x) = \tfrac{1}{2}x^\top A x - b^\top x$ with $A \succ 0$. Then for any patience budget
$p_{\max} \in \mathbb{N}$, Algorithm~\ref{alg:elim} terminates in finitely many outer
iterations and returns the unique global minimiser of $\min_{x \geq 0} f(x)$, even under degeneracy. The same holds for the equality-augmented
problem~\eqref{eq:eqp} with the reduced gradient of line~5 adjusted by $B^\top\lambda$,
provided $B_{\mathcal{F}}$ retains full row rank on every free set the loop visits (automatic
for $p=1$; generic once $|\mathcal{F}| \geq p$).
\end{theorem}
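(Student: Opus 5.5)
We split the argument into three parts: finiteness of the fast path, termination of the fallback phase, and correctness of the returned point. Throughout we treat exact inner solves with $\varepsilon=0$; the inexact case is deferred to the perturbation result. Since $A\succ0$, every principal submatrix $A_{\mathcal{F}}$ is SPD (Section~\ref{sec:problem}). Hence each free set $\mathcal{F}$ determines a unique complementary basic solution $x(\mathcal{F})$, $s(\mathcal{F})$. The violator sets $\mathcal{D},\mathcal{V}$ and the count $N$ are therefore deterministic functions of $\mathcal{F}$, and the loop is a deterministic map on the finite set $2^{\{1,\dots,n\}}$.

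\emph{Step 1: the fast path is used only finitely often.} The record $\bar N$ is a strictly decreasing integer in $\{0,\dots,n+1\}$, so it can be reset at most $n+1$ times. Between consecutive resets at most $p_{\max}$ non-improving batch steps occur, because each one decrements $p$ and $p$ is restored only on improvement. Once the patience is spent and no further improvement occurs, every subsequent iteration is a fallback step. A fallback step leaves $\bar N$ alone, so if some later $N<\bar N$ appears, the fast path resumes; but this too can happen at most $n+1$ times overall. Consequently, after finitely many outer steps the algorithm either terminates or enters a tail consisting solely of least-index single pivots.

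\emph{Step 2: the least-index tail cannot cycle.} In that tail the iteration is exactly Murty's principal pivoting method on $\mathrm{LCP}(A,-b)$: pick the smallest index $i$ that is infeasible, either $x_i<0$ for $i\in\mathcal{F}$ or $s_i<0$ for $i\notin\mathcal{F}$, and exchange it. For a $P$-matrix, Murty~\cite{murty1988} shows that this rule never revisits a complementary basis. The standard argument picks the largest index pivoted infinitely often and derives a sign contradiction on a principal minor. Because there are only $2^n$ bases, the tail is finite and ends at a basis with no violators, $N=0$. Degeneracy causes no trouble: $x_i=0$ or $s_i=0$ at a basic index is not a violation, and Murty's proof needs no nondegeneracy assumption. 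We cite~\cite{murty1988,judice1994} for this rather than reproving it. Starting the tail from an arbitrary basis is harmless, since Murty's method converges from any initial complementary basis.

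\emph{Step 3: correctness and the equality-augmented case.} At the break we have $x\ge0$ on $\mathcal{F}$, $s=0$ on $\mathcal{F}$ by construction of the solve, $s\ge0$ off $\mathcal{F}$, and $x=0$ off $\mathcal{F}$. This is exactly~\eqref{eq:lcp}, and by strict convexity its solution is the unique global minimiser. For~\eqref{eq:eqp} we repeat Steps 1 and 2 with the basic solution supplied by the Schur elimination of Section~\ref{sec:reduction}. This is well defined on every visited $\mathcal{F}$ by the full-row-rank hypothesis. The KKT system is then the mixed LCP obtained after eliminating $(x,\lambda)$ along the affine constraint. Its matrix is $A$ restricted to the null space of $B$ in a suitable basis, and it remains positive definite on the reduced space. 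Murty's least-index argument, which uses only the sign structure of nonsingular principal pivots of a positive definite form, carries over to it.

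The main obstacle is Step 2 for the equality-augmented variant. There we must verify that the Murty/Júdice--Pires no-cycling proof still holds for the mixed problem. The approach we would try first is to write each pivot's effect as a principal pivot on the Schur-reduced matrix $A_{\mathcal{F}}^{-1}-V_1S^{-1}V_1^\top$ and its complement, and then check that the required positivity of the relevant $2\times2$ minors follows from $A\succ0$ on $\ker B$.
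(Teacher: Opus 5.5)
For~\eqref{eq:bqp} your argument is correct and follows the paper's proof. The record $\bar N$ and the patience counter bound the number of fast-path steps. The remaining single pivots are Murty's least-index method on the $P$-matrix $\mathrm{LCP}(A,-b)$, started from whatever basis the fast path left. And $N=0$ is exactly~\eqref{eq:lcp}. The paper phrases the bookkeeping as at most $n+2$ fallback runs, each finite; your ``last fast-path step, then a pure least-index tail'' is equivalent.

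The gap is in the equality-augmented clause, and the route you sketch for it cannot work. Eliminating $\lambda$ does not produce a positive definite LCP on $\ker B$. The complementarity is still over all $n$ pairs $(x_i,s_i)$, and $x\ge0$ is not an orthant in coordinates on $\ker B$. The matrix you propose to examine, $P_{\mathcal{F}}=A_{\mathcal{F}}^{-1}-V_1S^{-1}V_1^\top$, satisfies $P_{\mathcal{F}}B_{\mathcal{F}}^\top=V_1-V_1S^{-1}S=0$. It is therefore only positive \emph{semi}definite, of rank $|\mathcal{F}|-p$. Its null directions $s_{\mathcal{F}}\in\operatorname{range}(B_{\mathcal{F}}^\top)$ are pure multiplier shifts that leave $x$ unchanged, so $A\succ0$ on $\ker B$ cannot make them positive.

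Globally, starting from $\mathcal{F}=\{1,\dots,n\}$, the exact loop is principal pivoting on the LCP $x=Ps+q'$, $x,s\ge0$, $x^\top s=0$. Here $P=A^{-1}-A^{-1}B^\top(BA^{-1}B^\top)^{-1}BA^{-1}$ is symmetric of rank $n-p$; for $B=\mathbf{1}^\top$ every row of $P$ sums to zero. Every principal minor of order greater than $n-p$ vanishes, so this is not a $P$-matrix. The positivity of minors you hope to derive is false, and Murty's argument, which rests on the $P$-property of every principal subproblem, does not transfer.

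What the clause actually needs is a finiteness theorem for least-index principal pivoting on a positive semidefinite (more generally, sufficient) LCP. One example is the least-index criss-cross method of den Hertog, Roos and Terlaky. The full-row-rank hypothesis is what makes such a result apply cleanly. The basis $\mathcal{F}$ is well defined iff $P_{\mathcal{F}^c\mathcal{F}^c}$ is nonsingular, iff $B_{\mathcal{F}}$ has full row rank. So every diagonal pivot met is nonzero, the exchange pivots of the criss-cross method never arise, and the method coincides with the Bland fallback. You also need $\{x\ge0: Bx=c\}\neq\emptyset$ for a minimiser to exist.

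The paper's own proof does not supply this step either. For~\eqref{eq:eqp} it only checks well-posedness of each saddle solve through the Schur complement and then reuses the $P$-matrix argument. So you have located the real weak point, but the plan you propose for it would fail.
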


\begin{proof}
\emph{Step 1 (each inner solve is well posed).}
Because $A \succ 0$, every principal submatrix $A_{\mathcal{F}}$ is SPD (hence $A$ is a
$P$-matrix~\cite{murty1988}, used in Step~3), so the system
$A_{\mathcal{F}}x_{\mathcal{F}} = b_{\mathcal{F}}$ has a unique solution to which CG converges
(Proposition~\ref{prop:cg}). In the
equality-augmented case the free-set solve is the saddle system~\eqref{eq:saddle}; with
$A_{\mathcal{F}} \succ 0$ and $B_{\mathcal{F}}$ of full row rank its Schur complement $S$ is
SPD, so~\eqref{eq:saddle} has the unique solution recovered by the $p+1$ SPD solves of
Section~\ref{sec:reduction}.

\emph{Step 2 (termination implies global optimality).}
Free-set stationarity gives $s_i = 0$ for $i \in \mathcal{F}$. The loop exits only when
$N = 0$: (i) all free $x_i \geq 0$ and (ii) all bound $s_i \geq 0$. These are exactly
$x \geq 0$, $s \geq 0$, $x_i s_i = 0$: the KKT system~\eqref{eq:lcp}. Strict convexity of
$f$ makes them sufficient for the unique global minimiser; it therefore suffices to show the
loop cannot run forever.

\emph{Step 3 (the fallback cannot cycle).}
Call a maximal block of consecutive iterates taking the \textbf{else} branch (the single Bland
pivot) a \emph{fallback run}. Within a run $\bar N$ is constant, and each iterate performs one
principal pivot on the lowest-indexed violator. This is precisely Murty's least-index
principal pivoting method for the $P$-matrix LCP of Step~2, which generates a sequence of
complementary bases (here, free sets) in which no basis recurs and which reaches the solution
in finitely many pivots~\cite{murty1988}. A fallback run is an initial segment of that
sequence started from the free set left by the preceding step, so each run is finite,
wherever it starts.

\emph{Step 4 (only finitely many batch steps and runs).}
The counter satisfies $0 \le N \le n$, and $\bar N$ is non-increasing, strictly decreasing
exactly when the progress branch ($N < \bar N$) fires; hence that branch fires at most $n+1$
times. Each remaining fast-path step has $N \ge \bar N$ and decrements $p$, and $p$ resets only
on a progress step, so at most $p_{\max}$ such steps occur between progress events. The number
of fallback runs is likewise bounded: a run ends at termination ($N=0$) or at a progress step,
and there are at most $n+1$ progress steps. Thus the algorithm takes at most
$(n+1) + (n+2)\,p_{\max}$ fast-path steps and at most $n+2$ fallback runs.

\emph{Step 5 (finite termination).}
By Step~3 each of the at most $n+2$ fallback runs is finite, and by Step~4 the fast-path steps
are finite in number; their sum is finite, so the loop reaches $N=0$ after finitely many outer
iterations, and Step~2 certifies the returned $x$ as the unique global minimiser. A crude
ceiling is the number of complementary bases, $2^n$.
\end{proof}

The $2^n$ ceiling establishes \emph{finiteness} only; it is not an efficiency estimate. The
guarantee is unconditional. The least-index fallback removes the non-degeneracy hypothesis
that earlier active-set arguments need, and it is the fallback, not the batch exchange, that
the proof rests on. In practice the fallback is rarely reached. A batch step fails to
reduce $N$ only when a dropped index is immediately re-added (or vice versa), which forces
$s_i = 0$ exactly for some toggled $i$, a codimension-one and hence non-generic
condition~\cite{judice1994}. Block principal pivoting therefore converges in a handful of
outer steps on typical data, with the fast path alone certifying optimality.
Section~\ref{sssec:fallback} exhibits the flip side: an anti-correlated design family that
makes the event systematic, on which the unguarded batch path provably cycles and only the
fallback terminates.

\paragraph{What is new relative to Júdice--Pires.}
For exact inner solves, the guarded loop above \emph{is} the block-principal-pivoting
algorithm of Júdice and Pires~\cite{judice1994}, namely a batch exchange safeguarded by Murty's
least-index single pivot~\cite{murty1988}, already finite on the strictly monotone
($A \succ 0$) LCP, even under degeneracy. We claim no novelty for that
construction and point out that Theorem~\ref{thm:termination} restates it in active-set language for
self-containedness. The contribution is what Table~\ref{tab:novelty} isolates, and its crux is
the first row where classical block-pivoting termination assumes each principal-block system is
solved \emph{exactly}, pointing out that the  matrix-free regime solves it to a specified tolerance.
Lemma~\ref{lem:inexact} closes this gap for the bound-constrained problem~\eqref{eq:bqp}, and
Corollary~\ref{cor:inexact-eq} extends it through the multiplier recovery of
Section~\ref{sec:reduction} to the equality-augmented problem~\eqref{eq:eqp}: once the inner
residual is tied to the decision margin, the inexact loop makes the identical primal and dual
sign decisions as the exact loop, so the Júdice--Pires count transfers verbatim. Without it,
block pivoting and Krylov inner solves do not obviously compose: a tolerance-level solve can
flip a sign test and reintroduce the cycling the fallback exists to prevent.

\begin{table}[ht]\centering\footnotesize
\begin{tabular}{lll}
\toprule
 & Júdice--Pires~\cite{judice1994} / Murty~\cite{murty1988} & This paper \\
\midrule
Inner solve  & exact principal-block solve & inexact matrix-free CG, guarantee preserved (Lemma~\ref{lem:inexact}, Cor.~\ref{cor:inexact-eq}) \\
Operator     & assembled/dense             & never forms $A$; $O(\sqrt\kappa)$ Krylov rate, $O(n)$ memory \\
Constraints  & bounds ($x\ge0$)            & bounds \emph{and} equality $Bx=c$ (Schur complement) \\
SPD premise  & assumed                     & secured for rank-deficient $A=M^\top M$ by the regularising split \\
\bottomrule
\end{tabular}
\caption{What Theorem~\ref{thm:termination} and Lemma~\ref{lem:inexact} add over the
finite-termination guarantee of block principal pivoting. The termination \emph{count} is
Júdice--Pires'; the novelty is carrying it to the inexact, matrix-free, equality-augmented
setting.}
\label{tab:novelty}
\end{table}

Theorem~\ref{thm:termination} assumes each inner call returns the exact minimiser of
$f_{\mathcal{F}}$, whereas CG returns an iterate accurate only to its residual tolerance. The
following lemma closes that gap. Once the residual is tight enough relative to the test
threshold $\varepsilon$, every primal and dual sign decision of Algorithm~\ref{alg:elim}
coincides with the decision the exact solve would make. The inexact loop then inherits the results of the
theorem.

\begin{lemma}[Inexact inner solves]\label{lem:inexact}
Fix the bound-constrained problem~\eqref{eq:bqp}. For a free set $\mathcal{F}$ let
$\hat x(\mathcal{F})$ denote the exact solution of~\eqref{eq:innerspd} extended by zeros to
$\mathbb{R}^n$, and $\hat s(\mathcal{F}) = A\,\hat x(\mathcal{F}) - b$ its reduced gradient.
Let $\mathcal{F}_0 = \{1,\dots,n\}, \mathcal{F}_1, \dots, \mathcal{F}_T$ be the free-set
trajectory of Algorithm~\ref{alg:elim} run with \emph{exact} inner solves, finite by
Theorem~\ref{thm:termination}. Define the \emph{decision margin}
\[
  \mu \;=\; \min_{0 \le t \le T} \min\Big(
    \big\{\, |\hat x_i(\mathcal{F}_t)| : i \in \mathcal{F}_t,\ \hat x_i(\mathcal{F}_t) \neq 0 \,\big\}
    \,\cup\,
    \big\{\, |\hat s_i(\mathcal{F}_t)| : i \notin \mathcal{F}_t,\ \hat s_i(\mathcal{F}_t) \neq 0 \,\big\}
  \Big),
\]
the smallest nonzero magnitude any primal or dual test encounters along the exact
trajectory; $\mu > 0$ as a minimum of finitely many positive numbers. Suppose the test
tolerance satisfies $0 < \varepsilon < \mu$ and every inner CG call is stopped at a residual
$\rho = \lVert b_{\mathcal{F}} - A_{\mathcal{F}}\tilde x_{\mathcal{F}} \rVert_2$ obeying
\begin{equation}\label{eq:inexactcond}
  \rho
  \left(\frac{1 + \lambda_{\max}(A)}{\lambda_{\min}(A)}\right)
  \;<\; \min\big(\varepsilon,\ \mu - \varepsilon\big).
\end{equation}
Then at every outer step the violator sets $\mathcal{D}, \mathcal{V}$ of
Algorithm~\ref{alg:elim} computed from the CG iterate coincide with those computed from the
exact solve on the same free set. The inexact loop therefore traverses exactly the free-set
trajectory $\mathcal{F}_0,\dots,\mathcal{F}_T$ of the exact loop, Theorem~\ref{thm:termination}
applies verbatim, and the returned iterate satisfies
$\lVert \tilde x - x^\star \rVert_2 \le \rho/\lambda_{\min}(A)$, hence also
$\lVert \tilde x - x^\star \rVert_\infty \le \rho/\lambda_{\min}(A)$.
\end{lemma}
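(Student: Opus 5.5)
The plan is an induction along the exact trajectory $\mathcal{F}_0,\dots,\mathcal{F}_T$. The core is a pointwise perturbation bound on a fixed free set $\mathcal{F}$: the CG iterate $\tilde x(\mathcal{F})$, extended by zeros, and its reduced gradient $\tilde s = A\tilde x - b$ differ from $\hat x(\mathcal{F})$ and $\hat s(\mathcal{F})$ componentwise by at most
\[
\delta := \rho\,\frac{1+\lambda_{\max}(A)}{\lambda_{\min}(A)}.
\]
Condition~\eqref{eq:inexactcond} says exactly that $\delta < \min(\varepsilon,\mu-\varepsilon)$. The margin $\mu$ then separates the thresholded tests on the two vectors.

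\emph{Step 1 (perturbation bounds).} Write $r = b_{\mathcal{F}} - A_{\mathcal{F}}\tilde x_{\mathcal{F}}$, so $\|r\|_2 = \rho$. The error satisfies $e_{\mathcal{F}} = \hat x_{\mathcal{F}} - \tilde x_{\mathcal{F}} = A_{\mathcal{F}}^{-1} r$. Cauchy interlacing for the principal submatrix gives $\lambda_{\min}(A_{\mathcal{F}}) \ge \lambda_{\min}(A)$, hence $\|e_{\mathcal{F}}\|_\infty \le \|e_{\mathcal{F}}\|_2 \le \rho/\lambda_{\min}(A) \le \delta$. Off $\mathcal{F}$ both vectors vanish. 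For the dual test at $i\notin\mathcal{F}$ we have $\hat s_{\mathcal{B}} - \tilde s_{\mathcal{B}} = A_{\mathcal{B},\mathcal{F}} e_{\mathcal{F}}$. A submatrix has spectral norm at most $\|A\|_2 = \lambda_{\max}(A)$, so this difference is at most $\lambda_{\max}(A)\rho/\lambda_{\min}(A) \le \delta$ in every component. The generous constant $1+\lambda_{\max}$ covers both tests at once.

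\emph{Step 2 (identical decisions).} Fix $t$, with $\mathcal{F} = \mathcal{F}_t$, and take $i\in\mathcal{F}$. There are three cases.
\begin{itemize}
\item If $\hat x_i \neq 0$, then $|\hat x_i| \ge \mu > \varepsilon$. So either $\hat x_i \le -\mu$, giving $\tilde x_i \le -\mu+\delta < -\varepsilon$, or $\hat x_i \ge \mu$, giving $\tilde x_i \ge \mu - \delta > \varepsilon > -\varepsilon$.
\item If $\hat x_i = 0$, then $\tilde x_i \ge -\delta > -\varepsilon$.
\end{itemize}
Hence $\tilde x_i < -\varepsilon$ iff $\hat x_i < -\varepsilon$ (iff $\hat x_i<0$), so $\tilde{\mathcal{D}} = \hat{\mathcal{D}}$. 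The same argument with $\hat s_i$ for $i\notin\mathcal{F}$ gives $\tilde{\mathcal{V}} = \hat{\mathcal{V}}$.

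Every further quantity in Algorithm~\ref{alg:elim} is a deterministic function of these sets and of the current $(\mathcal{F}, \bar N, p)$. These quantities are $N$, the progress/patience branch, the updates of $\bar N$ and $p$, and the least-index $i^\star$. Induction on $t$, starting from the common $\mathcal{F}_0$ and initial counters, therefore shows that the inexact loop visits $\mathcal{F}_0,\dots,\mathcal{F}_T$ and stops at the same $T$. Theorem~\ref{thm:termination} then applies verbatim.

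\emph{Step 3 (accuracy of the output).} I must check that the exact loop's terminal $\hat x(\mathcal{F}_T)$ is the true minimiser $x^\star$, even though its exit test uses $\varepsilon$ rather than $0$. At $t=T$ no $\hat x_i < -\varepsilon$ and no $\hat s_i < -\varepsilon$. Every nonzero such entry has magnitude at least $\mu>\varepsilon$, so in fact $\hat x_{\mathcal{F}_T} \ge 0$ and $\hat s_{\mathcal{B}_T} \ge 0$ exactly. With $\hat s_{\mathcal{F}_T}=0$, this is the LCP~\eqref{eq:lcp}, whose unique solution is $x^\star$. Step~1 then yields $\|\tilde x - x^\star\|_2 = \|e_{\mathcal{F}_T}\|_2 \le \rho/\lambda_{\min}(A)$, and the $\infty$-norm bound follows trivially.

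\emph{Main obstacle.} I expect the main obstacle to be this bookkeeping rather than the estimates. Two points need care. First, the "exact" trajectory that defines $\mu$ must be the thresholded exact loop, and its thresholded termination must be shown to coincide with exact KKT, which is where $\varepsilon<\mu$ is used. Second, the decision sets alone must determine the entire state transition, including the patience counter and the Bland choice, so that the induction closes.
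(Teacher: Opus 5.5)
Your proposal is correct and follows essentially the same route as the paper: you bound the error by $\rho/\lambda_{\min}(A)$ via Cauchy interlacing, bound the off-set reduced-gradient perturbation by $\lambda_{\max}(A)\rho/\lambda_{\min}(A)$, use the margin to make the $\varepsilon$-thresholded sign tests agree, and close with induction on the deterministic counter updates. Your Step~3 adds a useful point: the paper simply cites Theorem~\ref{thm:termination} for $\hat x(\mathcal{F}_T) = x^\star$, whereas you use $\varepsilon < \mu$ to show explicitly that the exact loop's $\varepsilon$-thresholded exit satisfies the KKT system~\eqref{eq:lcp} exactly.
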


\begin{proof}
We write $r = b_{\mathcal{F}} - A_{\mathcal{F}}\tilde x_{\mathcal{F}}$, so
$\tilde x_{\mathcal{F}} - \hat x_{\mathcal{F}} = -A_{\mathcal{F}}^{-1} r$. From the Cauchy
interlacing theorem, $\lambda_{\min}(A_{\mathcal{F}}) \ge \lambda_{\min}(A)$, hence, in the
$2$-norm throughout,
\[
  \delta_x \;:=\; \lVert \tilde x_{\mathcal{F}} - \hat x_{\mathcal{F}} \rVert_2
  \;\le\; \lVert A_{\mathcal{F}}^{-1} r \rVert_2
  \;\le\; \rho/\lambda_{\min}(A).
\]
Since $|\tilde x_i - \hat x_i| \le \lVert \tilde x_{\mathcal{F}} - \hat x_{\mathcal{F}}
\rVert_2$ for every coordinate $i$, $\delta_x$ also bounds each individual primal test below.
For a bound index $i \notin \mathcal{F}$ the reduced gradients differ by
$\tilde s_i - \hat s_i = A_{i,\mathcal{F}}(\tilde x_{\mathcal{F}} - \hat x_{\mathcal{F}})$,
where $A_{i,\mathcal{F}}$ is the $i$th row of $A$ restricted to the free set. By
Cauchy--Schwarz and $\lVert A_{i,\mathcal{F}} \rVert_2 \le \lVert A \rVert_2 =
\lambda_{\max}(A)$, $\delta_s := \max_{i \notin \mathcal{F}} |\tilde s_i - \hat s_i| \le
\lambda_{\max}(A)\,\delta_x \le \lambda_{\max}(A)\,\rho/\lambda_{\min}(A)$. Both
perturbations are bounded by
$\delta := \rho\,(1+\lambda_{\max}(A))/\lambda_{\min}(A) < \min(\varepsilon, \mu-\varepsilon)$
by~\eqref{eq:inexactcond}, so $\delta_x, \delta_s \le \delta$.

We now fix $i \in \mathcal{F}_t$ at some step $t$ (writing $\mathcal{F} = \mathcal{F}_t$). If
$\hat x_i \ge 0$ then $\hat x_i = 0$ or $\hat x_i \ge \mu$ by definition of the margin (the
zero case is not excluded); either way $\tilde x_i \ge
-\delta > -\varepsilon$, so the inexact test does not flag $i$, matching the exact test. If
$\hat x_i < 0$ then $\hat x_i \le -\mu$, so $\tilde x_i \le -\mu + \delta < -\varepsilon$: both
tests flag $i$. The same argument, with $\hat s_i, \tilde s_i, \delta_s$ in place of
$\hat x_i, \tilde x_i, \delta_x$, shows $\tilde s_i \ge -\varepsilon$ whenever $\hat s_i \ge 0$
and $\tilde s_i < -\varepsilon$ whenever $\hat s_i < 0$, for every bound index $i \notin
\mathcal{F}$ --- the only comparison the dual test performs, since line~6 of
Algorithm~\ref{alg:elim} tests $s_i$ only off the free set, where the exact reduced gradient
need not vanish. (On $\mathcal{F}$ itself $\hat s_i = 0$ by stationarity and $\tilde s_i$ is
never tested, so no parallel case arises there.) Hence $\tilde{\mathcal{D}} =
\hat{\mathcal{D}}$ and $\tilde{\mathcal{V}} = \hat{\mathcal{V}}$ at step $t$; since both loops
start from $\mathcal{F}_0 = \{1,\dots,n\}$ and the counters $\bar N, p$ are deterministic
functions of the violator sequence, induction on $t$ gives the same free-set trajectory
$\mathcal{F}_0,\dots,\mathcal{F}_T$ and the same stopping step. At termination the exact
iterate is $x^\star$ (Theorem~\ref{thm:termination}), so
$\lVert \tilde x - x^\star \rVert_2 = \delta_x \le \rho/\lambda_{\min}(A)$, the off-set
entries being zero in both; $\lVert\cdot\rVert_\infty \le \lVert\cdot\rVert_2$ gives the same
bound in the $\infty$-norm.
\end{proof}

Lemma~\ref{lem:inexact} covers problem~\eqref{eq:bqp}, where the reduced gradient is
$s = Ax - b$ directly. In the equality-augmented problem~\eqref{eq:eqp}, $s$ carries the
multiplier term $-B^\top\lambda$, and $\lambda$ is itself recovered from the $p+1$ inexact CG
solves of Section~\ref{sec:reduction} rather than read off a single residual; the next result
propagates that second channel of inexactness through the Schur-complement recovery of
$\lambda$ before repeating the sign-test argument above.

\begin{corollary}[Inexact inner solves, equality-augmented case]\label{cor:inexact-eq}
Fix a free set $\mathcal{F}$ with $B_{\mathcal{F}}$ of full row rank, and let $v_0, V_1, S,
\lambda$ be as in Section~\ref{sec:reduction}, so $\hat x_{\mathcal{F}} = v_0 + V_1\lambda$.
Suppose the $p+1$ inner CG solves return $\tilde v_0, \tilde V_1$ at residuals
\[
  \rho_0 = \lVert b_{\mathcal{F}} - A_{\mathcal{F}}\tilde v_0 \rVert_2, \qquad
  \rho_1 = \lVert B_{\mathcal{F}}^\top - A_{\mathcal{F}}\tilde V_1 \rVert_2
\]
(the second the spectral norm of the residual matrix stacked over the $p$ column solves), that
the multiplier is recovered as $\tilde\lambda = \tilde S^{-1}(c - B_{\mathcal{F}}\tilde v_0)$
with $\tilde S = B_{\mathcal{F}}\tilde V_1$, and that
\begin{equation}\label{eq:eqresid}
  \rho_1\,\lVert B_{\mathcal{F}} \rVert_2 \;\le\; \tfrac{1}{2}\,\lambda_{\min}(A)\,\lambda_{\min}(S).
\end{equation}
Then
\[
  \delta_\lambda \;:=\; \lVert \tilde\lambda - \lambda \rVert_2 \;\le\;
  \frac{2\lVert B_{\mathcal{F}} \rVert_2}{\lambda_{\min}(S)\,\lambda_{\min}(A)}
  \big(\rho_0 + \rho_1 \lVert \lambda \rVert_2\big),
\]
and the recovered iterate $\tilde x_{\mathcal{F}} = \tilde v_0 + \tilde V_1\tilde\lambda$
satisfies
\[
  \delta_x^{\mathrm{eq}} \;:=\; \lVert \tilde x_{\mathcal{F}} - \hat x_{\mathcal{F}} \rVert_2
  \;\le\; \frac{\rho_0 + \rho_1\lVert\lambda\rVert_2}{\lambda_{\min}(A)}
  \;+\; \frac{\lVert B_{\mathcal{F}} \rVert_2 + \rho_1}{\lambda_{\min}(A)}\,\delta_\lambda.
\]
For a bound index $i \notin \mathcal{F}$, the reduced gradient $s_i = (Ax)_i - b_i -
(B^\top\lambda)_i$ perturbs by
\[
  \delta_s^{\mathrm{eq}} \;:=\; \max_{i \notin \mathcal{F}} |\tilde s_i - \hat s_i| \;\le\;
  \lambda_{\max}(A)\,\delta_x^{\mathrm{eq}} + \lVert B \rVert_2\,\delta_\lambda.
\]
If $\mu$ is redefined as in Lemma~\ref{lem:inexact} but along the exact free-set trajectory of
the \emph{equality-augmented} loop (Theorem~\ref{thm:termination}'s second clause), and
$\max(\delta_x^{\mathrm{eq}}, \delta_s^{\mathrm{eq}}) < \min(\varepsilon, \mu - \varepsilon)$ at
every outer step, then the sign-test argument in the proof of Lemma~\ref{lem:inexact} applies
verbatim with $\delta_x, \delta_s$ replaced by $\delta_x^{\mathrm{eq}}, \delta_s^{\mathrm{eq}}$:
the inexact equality-augmented loop traverses the same free-set trajectory as the exact one,
and Theorem~\ref{thm:termination}'s equality-augmented guarantee transfers verbatim.
\end{corollary}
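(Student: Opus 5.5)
The proof is a perturbation argument in three layers: first the multiplier, then the free-set iterate, then the bound-set reduced gradient. After that, the sign-test induction of Lemma~\ref{lem:inexact} is reused unchanged.

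Throughout, write $R_0 = b_{\mathcal{F}} - A_{\mathcal{F}}\tilde v_0$ and $R_1 = B_{\mathcal{F}}^\top - A_{\mathcal{F}}\tilde V_1$. Then $\tilde v_0 - v_0 = -A_{\mathcal{F}}^{-1}R_0$ and $\tilde V_1 - V_1 = -A_{\mathcal{F}}^{-1}R_1$. By Cauchy interlacing, exactly as in the proof of Lemma~\ref{lem:inexact}, $\lVert A_{\mathcal{F}}^{-1}\rVert_2 \le 1/\lambda_{\min}(A)$. This gives $\lVert \tilde v_0 - v_0\rVert_2 \le \rho_0/\lambda_{\min}(A)$ and $\lVert \tilde V_1 - V_1\rVert_2 \le \rho_1/\lambda_{\min}(A)$.

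\emph{Step 1: the Schur complement stays invertible.} We have $E := \tilde S - S = B_{\mathcal{F}}(\tilde V_1 - V_1)$, so $\lVert E\rVert_2 \le \lVert B_{\mathcal{F}}\rVert_2\,\rho_1/\lambda_{\min}(A) \le \tfrac12\lambda_{\min}(S)$ by~\eqref{eq:eqresid}. Note that $\tilde S$ need not be symmetric. Instead I use Weyl/Neumann: for every unit vector $z$, $\lVert \tilde S z\rVert_2 \ge \lVert S z\rVert_2 - \lVert E\rVert_2 \ge \tfrac12\lambda_{\min}(S)$. Hence $\tilde S$ is invertible with $\lVert \tilde S^{-1}\rVert_2 \le 2/\lambda_{\min}(S)$. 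This is where the factor $2$ in the statement comes from.

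\emph{Step 2: the multiplier bound.} Subtract $S\lambda = c - B_{\mathcal{F}}v_0$ from $\tilde S\tilde\lambda = c - B_{\mathcal{F}}\tilde v_0$ to get $\tilde S(\tilde\lambda - \lambda) = -B_{\mathcal{F}}(\tilde v_0 - v_0) - E\lambda$. Taking norms and using Step~1 gives $\delta_\lambda \le \frac{2}{\lambda_{\min}(S)}\lVert B_{\mathcal{F}}\rVert_2\big(\rho_0 + \rho_1\lVert\lambda\rVert_2\big)/\lambda_{\min}(A)$, which is the claimed bound.

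\emph{Step 3: the iterate bound.} Split the error as
\[
\tilde x_{\mathcal{F}} - \hat x_{\mathcal{F}} = (\tilde v_0 - v_0) + (\tilde V_1 - V_1)\lambda + \tilde V_1(\tilde\lambda - \lambda).
\]
The first two terms give $(\rho_0 + \rho_1\lVert\lambda\rVert_2)/\lambda_{\min}(A)$. For the third, write $\tilde V_1 = A_{\mathcal{F}}^{-1}(B_{\mathcal{F}}^\top - R_1)$, so $\lVert\tilde V_1\rVert_2 \le (\lVert B_{\mathcal{F}}\rVert_2 + \rho_1)/\lambda_{\min}(A)$, which yields the second term of $\delta_x^{\mathrm{eq}}$.

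\emph{Step 4: the reduced-gradient bound.} For $i\notin\mathcal{F}$ the $b_i$ terms cancel, leaving
\[
\tilde s_i - \hat s_i = A_{i,\mathcal{F}}(\tilde x_{\mathcal{F}} - \hat x_{\mathcal{F}}) - (B^\top)_{i,:}(\tilde\lambda - \lambda).
\]
Cauchy--Schwarz with $\lVert A_{i,\mathcal{F}}\rVert_2 \le \lambda_{\max}(A)$ and $\lVert (B^\top)_{i,:}\rVert_2 \le \lVert B\rVert_2$ gives $\delta_s^{\mathrm{eq}}$.

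\emph{Step 5: transfer to the loop.} With both perturbations below $\min(\varepsilon, \mu - \varepsilon)$, the case analysis in the proof of Lemma~\ref{lem:inexact} goes through unchanged. That case analysis uses only $\hat x_i \in \{0\}\cup[\mu,\infty)$ or $\hat x_i \le -\mu$, and the analogous dichotomy for $\hat s_i$ off $\mathcal{F}$. So it yields identical violator sets at each step. The same induction on $t$, using that the counters $\bar N$ and $p$ are deterministic functions of the violator sequence, gives the identical trajectory. The second clause of Theorem~\ref{thm:termination} then applies, and full row rank of $B_{\mathcal{F}}$ along that trajectory is its standing hypothesis.

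The main obstacle is Step~1: controlling $\tilde S^{-1}$ when $\tilde S$ is non-symmetric and merely near $S$. The lower-singular-value argument above avoids any symmetry requirement, and that is the first thing I would try.
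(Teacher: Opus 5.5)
Your proposal is correct and follows the paper's proof essentially step for step. It uses the same residual bounds via Cauchy interlacing, the same identity $\tilde S(\tilde\lambda-\lambda) = -B_{\mathcal{F}}(\tilde v_0 - v_0) - (\tilde S - S)\lambda$, the same three-term split of $\tilde x_{\mathcal{F}} - \hat x_{\mathcal{F}}$ with $\lVert\tilde V_1\rVert_2 \le (\lVert B_{\mathcal{F}}\rVert_2+\rho_1)/\lambda_{\min}(A)$, the same row-wise Cauchy--Schwarz for $\delta_s^{\mathrm{eq}}$, and the same reuse of the sign-test induction from Lemma~\ref{lem:inexact}. The only cosmetic difference is how you get $\lVert\tilde S^{-1}\rVert_2 \le 2/\lambda_{\min}(S)$: you use the lower bound $\lVert\tilde S z\rVert_2 \ge \tfrac12\lambda_{\min}(S)$ on unit vectors, where the paper uses the Neumann series for $(I + \Delta S\,S^{-1})^{-1}$, and the two arguments are equivalent.
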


\begin{proof}
Write $R_1 = B_{\mathcal{F}}^\top - A_{\mathcal{F}}\tilde V_1$, so $\tilde V_1 - V_1 =
-A_{\mathcal{F}}^{-1}R_1$ and, exactly as in Lemma~\ref{lem:inexact}'s proof,
$\lVert \tilde v_0 - v_0 \rVert_2 \le \rho_0/\lambda_{\min}(A)$ and
$\lVert \tilde V_1 - V_1 \rVert_2 \le \rho_1/\lambda_{\min}(A)$. Let
$\Delta S = \tilde S - S = B_{\mathcal{F}}(\tilde V_1 - V_1)$ and $\Delta d = \tilde d - d$
where $d = c - B_{\mathcal{F}}v_0$; then
$\lVert \Delta S \rVert_2 \le \lVert B_{\mathcal{F}} \rVert_2\,\rho_1/\lambda_{\min}(A)$ and
$\lVert \Delta d \rVert_2 \le \lVert B_{\mathcal{F}} \rVert_2\,\rho_0/\lambda_{\min}(A)$.
Condition~\eqref{eq:eqresid} gives $\lVert S^{-1}\rVert_2 \lVert\Delta S\rVert_2 \le 1/2$, so
the Neumann series for $\tilde S^{-1} = S^{-1}(I + \Delta S\,S^{-1})^{-1}$ gives
$\lVert \tilde S^{-1} \rVert_2 \le 2\lVert S^{-1} \rVert_2 = 2/\lambda_{\min}(S)$. Since
$\tilde S^{-1} - S^{-1} = -\tilde S^{-1}\Delta S\,S^{-1}$ and $S^{-1}d = \lambda$,
\[
  \tilde\lambda - \lambda = \tilde S^{-1}\tilde d - S^{-1}d
  = \tilde S^{-1}\Delta d + (\tilde S^{-1}-S^{-1})d
  = \tilde S^{-1}\big(\Delta d - \Delta S\,\lambda\big),
\]
and taking norms gives $\delta_\lambda$. For $\delta_x^{\mathrm{eq}}$,
\[
  \tilde x_{\mathcal{F}} - \hat x_{\mathcal{F}} = (\tilde v_0 - v_0) +
  \tilde V_1(\tilde\lambda - \lambda) + (\tilde V_1 - V_1)\lambda,
\]
and, using $\lVert V_1 \rVert_2 = \lVert A_{\mathcal{F}}^{-1}B_{\mathcal{F}}^\top \rVert_2 \le
\lVert B_{\mathcal{F}} \rVert_2/\lambda_{\min}(A)$,
$\lVert \tilde V_1 \rVert_2 \le \lVert V_1 \rVert_2 + \rho_1/\lambda_{\min}(A) \le
(\lVert B_{\mathcal{F}} \rVert_2 + \rho_1)/\lambda_{\min}(A)$; the triangle inequality gives
the stated bound. Finally $\tilde s_i - \hat s_i = A_{i,\mathcal{F}}(\tilde x_{\mathcal{F}} -
\hat x_{\mathcal{F}}) - B_{:,i}^\top(\tilde\lambda - \lambda)$, where $B_{:,i}$ is the $i$th
column of $B$ (so $B_{:,i}^\top\lambda = (B^\top\lambda)_i$), bounded as in
Lemma~\ref{lem:inexact}'s proof by $\lambda_{\max}(A)\,\delta_x^{\mathrm{eq}} + \lVert B
\rVert_2\,\delta_\lambda$. The sign-test agreement and trajectory transfer then follow exactly
as in the proof of Lemma~\ref{lem:inexact}, with $\delta_x^{\mathrm{eq}},
\delta_s^{\mathrm{eq}}$ in place of $\delta_x, \delta_s$.
\end{proof}

\subsection{Projected-gradient reference method}\label{ssec:proj}

The second strategy enforces the constraint at every step. Projected gradient iterates
\[
  x_{k+1} = \Pi_C\!\left(x_k - \tau\,\nabla f(x_k)\right),
  \qquad \nabla f(x) = A x - b,
\]
where $C$ is the feasible set. For~\eqref{eq:bqp} it is the non-negative orthant, so
$\Pi_C(y) = \max(y, 0)$ componentwise; for the single normalisation $\mathbf{1}^\top x=\beta$,
$x\geq0$ it is the simplex slice $\Delta_\beta$, projected onto in $O(n\log n)$ by a single
sort-and-threshold pass~\cite{duchi2008,condat2016}. For a general equality system $Bx=c$,
however, the projection onto $\mathcal{P} = \{x\geq0: Bx=c\}$ is itself a non-negative
quadratic program with no closed form, so projected gradient loses its per-step simplicity.
This is one reason the active-set loop is the method of choice once $p>1$ as its cost rises only
by the $p$ extra right-hand sides of Section~\ref{sec:reduction}. The step size is $\tau = 1/L$ with
$L = \lambda_{\max}(A)$, estimated once by power iteration on the same (matrix-free) operator.
There is no outer loop and no linear solve: each step is one gradient evaluation (two products
with $M$ when $A = M^\top M$) plus a projection.

\begin{proposition}[Projected-gradient convergence~{\cite{nesterov2004}}]\label{prop:proj}
  Let $A \succ 0$, so $f$ is $\mu$-strongly convex and $L$-smooth with
  $\mu = \lambda_{\min}(A)$ and $L = \lambda_{\max}(A)$, and let
  $x^\star = \arg\min_{x \in C} f(x)$. Projected gradient with $\tau = 1/L$ satisfies
  \[
    \|x_k - x^\star\|^2 \;\leq\; \left(1 - \tfrac{1}{\kappa(A)}\right)^{\!k}\|x_0 - x^\star\|^2,
    \qquad k \geq 0.
  \]
\end{proposition}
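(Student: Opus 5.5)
The plan is the standard strongly-convex argument for projected gradient, in three steps: a fixed-point characterisation of $x^\star$, nonexpansiveness of $\Pi_C$, and a contraction bound for the gradient step. Since $C$ is closed, convex and nonempty (the orthant, or $\Delta_\beta$), and $f$ is strictly convex with a unique minimiser $x^\star$ on $C$, the first-order optimality condition $\langle \nabla f(x^\star), y - x^\star\rangle \ge 0$ for all $y \in C$ is equivalent to the fixed-point identity
\[
  x^\star = \Pi_C\big(x^\star - \tau\nabla f(x^\star)\big), \qquad \tau > 0.
\]
This follows from the variational characterisation of the Euclidean projection, $z = \Pi_C(y)$ if and only if $\langle y - z, w - z\rangle \le 0$ for all $w\in C$.

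Next, I use that $\Pi_C$ is nonexpansive (firmly nonexpansive, in fact). Subtracting the fixed-point identity from the iteration gives
\[
  \|x_{k+1}-x^\star\| \le \|(x_k - x^\star) - \tau\,(\nabla f(x_k)-\nabla f(x^\star))\| = \|(I - \tau A)(x_k - x^\star)\|,
\]
because $\nabla f(x) = Ax - b$ is affine. The problem therefore reduces to bounding the linear operator $I - \tau A$.

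Because $f$ is quadratic, I would not invoke the general co-coercivity inequality and would instead compute directly. $I - A/L$ is symmetric with eigenvalues $1 - \lambda_j/L \in [0,\, 1 - \mu/L]$, so $\|I - \tau A\|_2 = 1 - 1/\kappa$. This gives
\[
  \|x_{k+1}-x^\star\|^2 \le (1-1/\kappa)^2\,\|x_k - x^\star\|^2 \le (1-1/\kappa)\,\|x_k-x^\star\|^2,
\]
where the last step uses $0 \le 1-1/\kappa \le 1$. Induction on $k$ then yields the stated bound, which is in fact weaker than what the argument delivers.

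For a general $L$-smooth, $\mu$-strongly convex $f$ one would instead expand $\|x - y - \tau(\nabla f(x) - \nabla f(y))\|^2$. Using $\langle \nabla f(x)-\nabla f(y), x-y\rangle \ge \mu\|x-y\|^2$ together with $\|\nabla f(x)-\nabla f(y)\|^2 \le L\langle\nabla f(x)-\nabla f(y),x-y\rangle$ at $\tau = 1/L$, one gets the factor $1-\mu/L$ exactly, as in \cite{nesterov2004}.

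The only step that needs care is the fixed-point characterisation of $x^\star$, i.e.\ the equivalence between the variational inequality and the projection identity. It is standard, and the remaining steps are routine linear algebra. I would also note that the contraction factor is explicit and independent of $C$, so the same proof covers both the orthant and the simplex slice.
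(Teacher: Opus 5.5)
Your proof is correct, but at the contraction step it takes a different route from the paper's sketch. The fixed-point identity and non-expansiveness of $\Pi_C$ are the same in both.

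The paper expands the squared norm and invokes co-coercivity of the $L$-smooth gradient, $\|\nabla f(x)-\nabla f(y)\|^2 \le L\langle \nabla f(x)-\nabla f(y),\, x-y\rangle$. This gives $\|x_{k+1}-x^\star\|^2 \le \|x_k-x^\star\|^2 - \tau(2-\tau L)\langle \nabla f(x_k)-\nabla f(x^\star),\, x_k-x^\star\rangle$. It then uses $\tau = 1/L$ and strong convexity to land exactly on the factor $1-\mu/L$. That is the argument you relegate to your final paragraph.

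Your main route instead exploits that $\nabla f$ is affine. Before projection, the error map is then the symmetric matrix $I - A/L$, whose spectrum lies in $[0,\, 1-1/\kappa]$.

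What your route buys:
\begin{itemize}
\item It is more elementary, since it needs no co-coercivity lemma.
\item It is sharper: it gives the factor $(1-1/\kappa)^2$ per step on the squared error, so the stated bound holds with room to spare.
\item It makes visible that the step $\tau = 2/(L+\mu)$ would improve the per-step norm contraction to $(\kappa-1)/(\kappa+1)$.
\end{itemize}

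What it gives up is generality. It depends on $f$ being quadratic, whereas the paper's argument holds verbatim for any $\mu$-strongly convex, $L$-smooth $f$, which is the setting of the cited result.
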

\begin{proof}[Proof sketch]
$x^\star = \Pi_C(x^\star - \tau\nabla f(x^\star))$ is a fixed point; non-expansiveness of
$\Pi_C$ and co-coercivity of the $L$-smooth gradient give
$\|x_{k+1}-x^\star\|^2 \le \|x_k-x^\star\|^2 - \tau(2-\tau L)\langle \nabla f(x_k)-\nabla f(x^\star),\,x_k-x^\star\rangle$.
Setting $\tau = 1/L$ makes $2-\tau L = 1$, and $\mu$-strong convexity bounds the inner product
below by $\mu\|x_k-x^\star\|^2$, so $\|x_{k+1}-x^\star\|^2 \le (1-\mu/L)\|x_k-x^\star\|^2$.
\end{proof}

The iteration count scales as $O(\kappa)$, against $O(\sqrt\kappa)$ for the CG inner loop
(Proposition~\ref{prop:cg}). Both methods pay the same per-step cost (one operator
application), so the $\sqrt\kappa$ gap in iteration counts is a runtime gap on
ill-conditioned problems. Nesterov acceleration would lift the projected-gradient rate to
$O(\sqrt\kappa)$, but CG attains that rate \emph{optimally} within the Krylov subspace. Each
CG iterate minimises $f$ over $\mathcal{K}_k(A_{\mathcal{F}}, b_{\mathcal{F}})$, a strictly
richer search space than two-point momentum, and typically with a smaller constant. Plain
projected
gradient is therefore the honest first-order reference method, isolating the Krylov advantage; a
FISTA variant~\cite{beck2009} narrows the constant but not the asymptotic gap. The trade-off
is otherwise in the other direction: projected gradient is loop-free and needs no
complementarity certificate, so it is the simpler method when $\kappa$ is small or moderate
accuracy suffices.

\subsection{Warm-starting}\label{ssec:warm}

For a parametric family of problems (e.g.\ a sequence $b(\theta_1), b(\theta_2), \dots$ of
right-hand sides, or a swept normalisation $\beta$), consecutive minimisers usually differ in
only a few active constraints. Passing the previous problem's $(\mathcal{F}, x)$ pair as the
initial state of the next solve reduces both the outer count (the free set needs fewer primal
and dual adjustments) and the inner count (CG starts from a small initial residual, its guess
being the previous solution restricted to the new free set). The support-stable case admits a
clean statement.

\begin{proposition}[Warm starts across a support-stable step]\label{prop:warm}
Let $b' = b + \delta$ and suppose the free set $\mathcal{F}$ returned by
Algorithm~\ref{alg:elim} for $b$ is also optimal for $b'$, i.e.\ the exact solve
$\hat x_{\mathcal{F}} = A_{\mathcal{F}}^{-1} b'_{\mathcal{F}}$ is non-negative and its reduced
gradient satisfies $s \geq 0$ on the bound set. Then Algorithm~\ref{alg:elim} started from
$\mathcal{F}$ terminates in a \emph{single} outer step, and its one CG solve, warm-started at
the previous solution $x_{\mathcal{F}}$, reaches energy-norm accuracy $\tau$ in at most
\[
  k \;\leq\; \left\lceil \frac{\sqrt{\kappa}+1}{2}\,
  \ln\!\frac{2\,\lVert \delta \rVert_2}{\tau\sqrt{\lambda_{\min}(A)}} \right\rceil
  \quad\text{iterations},
\]
the inner count scales with the logarithm of the \emph{parameter step} $\lVert\delta\rVert$,
not of the solution norm.
\end{proposition}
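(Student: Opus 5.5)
The argument has two independent parts: the outer-loop count follows from the optimality hypothesis, and the inner count follows from Proposition~\ref{prop:cg} once the initial energy-norm error is bounded by the parameter step $\lVert\delta\rVert$.

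\emph{Outer count.} Start Algorithm~\ref{alg:elim} with $\mathcal{F}$ as the initial free set. The first outer step solves $A_{\mathcal{F}}x_{\mathcal{F}} = b'_{\mathcal{F}}$ and obtains $\hat x_{\mathcal{F}}$. By hypothesis $\hat x_{\mathcal{F}} \ge 0$, so $\mathcal{D} = \emptyset$. Also by hypothesis $s \ge 0$ on the bound set, so $\mathcal{V} = \emptyset$. Hence $N = 0$ and line~8 breaks after one outer step. By Step~2 of the proof of Theorem~\ref{thm:termination}, the returned point is the global minimiser for $b'$. For inexact solves, Lemma~\ref{lem:inexact} guarantees the same sign decisions once the residual is below the margin. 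I would state that as the regime intended and not belabour it.

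\emph{Inner count.} The warm start is $x_0 = x_{\mathcal{F}}$, the previous solution, which satisfies $A_{\mathcal{F}}x_{\mathcal{F}} = b_{\mathcal{F}}$ exactly; I treat it as exact for the old problem. The initial error is
\[
e_0 = \hat x_{\mathcal{F}} - x_{\mathcal{F}} = A_{\mathcal{F}}^{-1}(b'_{\mathcal{F}} - b_{\mathcal{F}}) = A_{\mathcal{F}}^{-1}\delta_{\mathcal{F}}.
\]
Therefore
\[
\lVert e_0\rVert_{A_{\mathcal{F}}}^2 = \delta_{\mathcal{F}}^\top A_{\mathcal{F}}^{-1}\delta_{\mathcal{F}} \le \lVert\delta\rVert_2^2/\lambda_{\min}(A_{\mathcal{F}}) \le \lVert\delta\rVert_2^2/\lambda_{\min}(A),
\]
where the last step uses Cauchy interlacing, as in the proof of Lemma~\ref{lem:inexact}. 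This gives $\lVert e_0\rVert_{A_{\mathcal{F}}} \le \lVert\delta\rVert_2/\sqrt{\lambda_{\min}(A)}$.

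Next apply Proposition~\ref{prop:cg} with $\kappa = \kappa(A_{\mathcal{F}})$. Interlacing gives $\kappa(A_{\mathcal{F}}) \le \kappa(A)$, so the rate may be stated with either; I read $\kappa$ in the statement as $\kappa(A_{\mathcal{F}})$. The bound is
\[
\lVert e_k\rVert_{A_{\mathcal{F}}} \le 2q^k\,\lVert\delta\rVert_2/\sqrt{\lambda_{\min}(A)}, \qquad q = \frac{\sqrt\kappa-1}{\sqrt\kappa+1}.
\]
It suffices to have $2q^k\lVert\delta\rVert_2/\sqrt{\lambda_{\min}(A)} \le \tau$, that is, $k \ge \ln\bigl(2\lVert\delta\rVert_2/(\tau\sqrt{\lambda_{\min}(A)})\bigr)/\ln(1/q)$.

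\emph{Main obstacle: converting to the stated constant.} This step needs a lower bound on $\ln(1/q)$. Write
\[
\ln(1/q) = \ln\frac{1+1/\sqrt\kappa}{1-1/\sqrt\kappa} = 2\,\mathrm{artanh}(1/\sqrt\kappa).
\]
Since $\mathrm{artanh}(y) \ge y$, this gives $\ln(1/q) \ge 2/\sqrt\kappa$, hence $1/\ln(1/q) \le \sqrt\kappa/2 \le (\sqrt\kappa+1)/2$. Alternatively, $\ln(1/q) = \ln\bigl(1 + 2/(\sqrt\kappa-1)\bigr) \ge 2/(\sqrt\kappa+1)$ via $\ln(1+u) \ge u/(1+u/2)$, which lands on the stated factor directly.

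Taking the ceiling then yields the bound. If the logarithm is nonpositive, zero iterations suffice, which is consistent with the ceiling. One remaining caveat is that the previous solution was itself only computed to tolerance. If the previous CG residual $\rho$ is carried along, it adds $\rho/\sqrt{\lambda_{\min}(A)}$ to $\lVert e_0\rVert_{A_{\mathcal{F}}}$. I would note this as a remark, with the stated bound holding for the exact previous solution.
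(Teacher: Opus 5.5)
Your proof is correct and follows the paper's own argument: both violator sets are empty, so the loop exits after one outer step and Step~2 of Theorem~\ref{thm:termination} certifies the minimiser; the count then follows from $e_0 = A_{\mathcal{F}}^{-1}\delta_{\mathcal{F}}$, Cauchy interlacing, Proposition~\ref{prop:cg}, and $\ln(1/q) \ge 2/(\sqrt\kappa+1)$, an inequality you justify where the paper only asserts it. One small slip: your second route via $\ln(1+u) \ge u/(1+u/2)$ actually gives the sharper $2/\sqrt\kappa$ (still sufficient), and it is $\ln(1+u) \ge u/(1+u)$ that lands on $2/(\sqrt\kappa+1)$ exactly.
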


\begin{proof}
The single solve on $\mathcal{F}$ returns $\hat x_{\mathcal{F}} \geq 0$ with $s \geq 0$ on the
bound set, so both violator sets of line~5 are empty, the loop exits, and Step~2 of
Theorem~\ref{thm:termination}'s proof certifies the global minimiser. For the iteration
bound, the warm start's initial error is
$e_0 = \hat x_{\mathcal{F}} - x_{\mathcal{F}} = A_{\mathcal{F}}^{-1}\delta_{\mathcal{F}}$,
whose energy norm is
$\lVert e_0 \rVert_{A_{\mathcal{F}}} = (\delta_{\mathcal{F}}^\top A_{\mathcal{F}}^{-1}
\delta_{\mathcal{F}})^{1/2} \leq \lVert\delta\rVert_2 / \sqrt{\lambda_{\min}(A)}$ by Cauchy
interlacing. Proposition~\ref{prop:cg} gives
$\lVert e_k \rVert_{A_{\mathcal{F}}} \leq 2\rho^k \lVert e_0 \rVert_{A_{\mathcal{F}}}$ with
$\rho = (\sqrt\kappa - 1)/(\sqrt\kappa + 1)$, and
$\ln(1/\rho) \geq 2/(\sqrt\kappa + 1)$; solving $2\rho^k\lVert e_0\rVert \leq \tau$ for $k$
yields the bound.
\end{proof}

When the support drifts, no comparable a-priori bound is available. The loop is not
monotone in the free set, as the adversarial family of Section~\ref{sssec:fallback} makes
plain. The empirical picture is nonetheless simple: the warm-started outer count tracks the
support drift $|\mathcal{F} \,\triangle\, \mathcal{F}'|$ and collapses to one whenever the
drift is zero, exactly as the proposition prescribes (Section~\ref{ssec:frontier}). A direct
inner solver profits only from the warm free set, having no analogue of an initial guess. The
matrix-free CG inner solver profits from both, which is where warm-started parametric sweeps
obtain their largest speed-ups.

%% file: sections/s6_conditioning.tex
\section{Regularisation and Preconditioning}\label{sec:conditioning}

Both the inner CG rate (Proposition~\ref{prop:cg}) and the projected-gradient rate
(Proposition~\ref{prop:proj}) are governed by $\kappa = \kappa(A)$, so anything that
compresses the spectrum of $A$ shortens the solve. Two mechanisms are available: a
\emph{regularising split}, which moves the operator toward a target the application deems
preferable, and \emph{preconditioning}, which accelerates the solve of the operator as given.
They coincide in a convenient way, developed in turn below.

\subsection{Regularisation}\label{ssec:reg}

We replace $A$ by the convex combination
\begin{equation}\label{eq:regsplit}
  A_\alpha = (1-\alpha)\,A + \alpha\,R^\top R, \qquad \alpha \in (0,1),\; R^\top R \succ 0,
\end{equation}
a ridge/Tikhonov regularisation of $A$ toward the SPD target $R^\top R$. When $A = M^\top M$
this is again a Gram matrix, of the stacked operator
\[
  \tilde{M} = \begin{pmatrix} \sqrt{1-\alpha}\,M \\ \sqrt{\alpha}\,R \end{pmatrix},
  \qquad \tilde{M}^\top\tilde{M} = A_\alpha,
\]
so it is simply the Gram backend of $\tilde M$ (Section~\ref{sec:reduction})
and CG runs on $\tilde{M}_{\mathcal{F}}$ with no change. The
augmented-matrix device is standard in least-squares
regularisation~\cite{golub2013,lawson1974}.

\begin{proposition}[Condition number under the regularising split]\label{prop:regcond}
  For $A_\alpha$ as in~\eqref{eq:regsplit}, Weyl's inequality~\cite{hornjohnson2013} gives
  \[
    \kappa(A_\alpha) \;\leq\;
    \frac{(1-\alpha)\lambda_{\max}(A) + \alpha\,\lambda_{\max}(R^\top R)}
         {\alpha\,\lambda_{\min}(R^\top R)}.
  \]
  It decreases monotonically in $\alpha$; for the scaled-identity target
  $R^\top R = \bar\lambda I$ it is
  $\kappa(A_\alpha) \le \tfrac{1-\alpha}{\alpha}\,\tfrac{\lambda_{\max}(A)}{\bar\lambda} + 1
  = O\!\big((1-\alpha)/\alpha\big)$.
\end{proposition}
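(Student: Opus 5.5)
The plan is to bound the two extreme eigenvalues of $A_\alpha$ separately by Weyl's inequality and then divide. Both summands of~\eqref{eq:regsplit} are symmetric, so Weyl applies. For the top of the spectrum,
\[
  \lambda_{\max}(A_\alpha) \le (1-\alpha)\lambda_{\max}(A) + \alpha\,\lambda_{\max}(R^\top R).
\]
For the bottom,
\[
  \lambda_{\min}(A_\alpha) \ge (1-\alpha)\lambda_{\min}(A) + \alpha\,\lambda_{\min}(R^\top R) \ge \alpha\,\lambda_{\min}(R^\top R).
\]
The last step drops the nonnegative term $(1-\alpha)\lambda_{\min}(A)$, using $\alpha<1$ and $A\succeq 0$. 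Only positive semidefiniteness of $A$ is needed here, which is the point of the section: it covers rank-deficient $M^\top M$. Since $R^\top R\succ 0$ and $\alpha>0$, this lower bound is strictly positive. Hence $A_\alpha\succ 0$ and the $P$-matrix property is restored. The ratio of the two bounds is the stated estimate on $\kappa(A_\alpha)=\lambda_{\max}/\lambda_{\min}$.

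For monotonicity, I will read the claim as referring to the displayed upper bound. I will write it as
\[
  g(\alpha) = \frac{(1-\alpha)a + \alpha c}{\alpha d} = \frac{a}{d}\Big(\frac1\alpha - 1\Big) + \frac{c}{d},
\]
with $a=\lambda_{\max}(A)$, $c=\lambda_{\max}(R^\top R)$ and $d=\lambda_{\min}(R^\top R)$. Then $g'(\alpha) = -a/(d\alpha^2)\le 0$, so the bound decreases in $\alpha$, strictly whenever $A\neq 0$.

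The main obstacle is exactly this reading of ``it decreases''. Monotonicity of the true condition number $\kappa(A_\alpha)$ is not implied by Weyl alone. It can fail in general when $R^\top R$ is itself badly conditioned, since then $\kappa(A_\alpha)\to\kappa(R^\top R)$ as $\alpha\to 1$, which can exceed $\kappa(A)$. I would therefore state explicitly that the monotone quantity is the bound. For the scaled-identity target one can say more: the eigenvalues of $A_\alpha$ are $(1-\alpha)\lambda_i+\alpha\bar\lambda$, so $\kappa(A_\alpha)$ itself is a ratio of affine functions of $\alpha$. Its derivative has the sign of $\bar\lambda(\lambda_{\min}-\lambda_{\max})\le 0$, so true monotonicity holds in that case.

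For the scaled identity, substituting $c=d=\bar\lambda$ gives
\[
  g(\alpha)=\frac{1-\alpha}{\alpha}\,\frac{\lambda_{\max}(A)}{\bar\lambda}+1,
\]
which is the closed form in the statement and is visibly $O((1-\alpha)/\alpha)$ for fixed $A$ and $\bar\lambda$. The exact spectral expression above would sharpen this to
\[
  \frac{(1-\alpha)\lambda_{\max}(A)+\alpha\bar\lambda}{(1-\alpha)\lambda_{\min}(A)+\alpha\bar\lambda},
\]
which is consistent with the bound.
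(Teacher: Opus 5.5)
Your argument is correct and is exactly the one the paper has in mind: the paper gives no proof beyond ``Weyl's inequality gives'', namely Weyl applied to $\lambda_{\max}$ and $\lambda_{\min}$ of the two summands with the nonnegative term $(1-\alpha)\lambda_{\min}(A)$ dropped, which is why $A\succeq 0$ suffices. Your caveat about the monotonicity claim is also correct and worth keeping: for a general target only the displayed bound is monotone (for instance $A=I$ and $R^\top R=\operatorname{diag}(1,100)$ give $\kappa(A_\alpha)=1+99\alpha$, which increases), whereas for $R^\top R=\bar\lambda I$ your derivative computation shows that $\kappa(A_\alpha)$ itself decreases.
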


The split works in two ways. First, it lowers $\kappa$ and hence the iteration count of both
solvers (Propositions~\ref{prop:cg} and~\ref{prop:proj}). Second, because $A_\alpha \succ 0$
strictly whenever $\alpha > 0$, it restores the $P$-matrix property on which
Theorem~\ref{thm:termination} depends, even when $A$ is only positive semidefinite (a
rank-deficient least-squares operator with $m < n$). A strictly positive $\alpha$ is therefore the natural
setting for the active-set loop: it simultaneously conditions the inner solve and guarantees
that every principal submatrix is invertible.

\subsection{Preconditioning}\label{ssec:precond}

Regularisation lowers $\kappa$ by changing the operator. When no modelling change is
admissible and $A$ must be solved as given, the same reduction in iteration count is available
through preconditioning, which leaves the minimiser untouched. A symmetric positive definite
preconditioner $P_{\mathcal{F}} \approx A_{\mathcal{F}}$ turns the inner
solve~\eqref{eq:innerspd} into preconditioned CG (PCG), equivalently CG on the symmetrically
scaled operator $P_{\mathcal{F}}^{-1/2} A_{\mathcal{F}} P_{\mathcal{F}}^{-1/2}$, whose spectrum
coincides with that of $P_{\mathcal{F}}^{-1} A_{\mathcal{F}}$. PCG realises this without ever
forming $P_{\mathcal{F}}^{-1/2}$: each iteration adds a single application of
$P_{\mathcal{F}}^{-1}$ to the one mat-vec with $A_{\mathcal{F}}$~\cite{golub2013,benzi2005}.

\begin{proposition}[Preconditioned inner rate]\label{prop:pcg}
  Let $P_{\mathcal{F}}$ be SPD and write $\kappa_P = \kappa(P_{\mathcal{F}}^{-1}
  A_{\mathcal{F}})$. The PCG iterates for~\eqref{eq:innerspd} satisfy the energy-norm bound of
  Proposition~\ref{prop:cg} with $\kappa$ replaced by $\kappa_P$, so reaching a fixed relative
  tolerance costs $O(\sqrt{\kappa_P}\,\log\varepsilon^{-1})$ iterations. A preconditioner helps
  exactly when $\kappa_P < \kappa(A_{\mathcal{F}})$, and is ideal, $\kappa_P = 1$, when
  $P_{\mathcal{F}} = A_{\mathcal{F}}$.
\end{proposition}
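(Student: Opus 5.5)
The plan is to reduce the statement to Proposition~\ref{prop:cg} by a change of variables that turns PCG into plain CG on a symmetrically scaled SPD system. Let $P_{\mathcal{F}} = P^{1/2}P^{1/2}$ with $P^{1/2}$ the SPD square root, and set $\hat A = P^{-1/2}A_{\mathcal{F}}P^{-1/2}$, $\hat b = P^{-1/2}b_{\mathcal{F}}$, $y = P^{1/2}x_{\mathcal{F}}$. Then~\eqref{eq:innerspd} is equivalent to $\hat A y = \hat b$, and $\hat A$ is SPD as a congruence of the SPD matrix $A_{\mathcal{F}}$.

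The first step is the standard identification of the PCG recurrences with CG run on $\hat A y = \hat b$. Substituting $y_k = P^{1/2}x_k$, residuals $\hat r_k = P^{-1/2}r_k$ and directions $\hat p_k = P^{1/2}p_k$ into the CG recurrences for $\hat A$, every inner product $\hat r_k^\top \hat r_k$ becomes $r_k^\top P^{-1}r_k$, and every product $\hat p_k^\top \hat A\hat p_k$ becomes $p_k^\top A_{\mathcal{F}} p_k$. These are exactly the PCG formulas, so $P^{-1/2}$ is never formed. Equivalently, the PCG iterate minimises the energy error over $x_0 + \mathcal{K}_k(P^{-1}A_{\mathcal{F}}, P^{-1}r_0)$, which is the image of $\hat A$'s Krylov space under $P^{-1/2}$.

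The second step transfers the norms. The errors satisfy $\hat e_k = P^{1/2}e_k$, so
$\lVert \hat e_k\rVert_{\hat A}^2 = e_k^\top P^{1/2}P^{-1/2}A_{\mathcal{F}}P^{-1/2}P^{1/2}e_k = \lVert e_k\rVert_{A_{\mathcal{F}}}^2$.
Applying Proposition~\ref{prop:cg} to $\hat A$ therefore gives the claimed energy-norm bound with $\kappa(\hat A)$ in place of $\kappa$. Since $\hat A = P^{1/2}(P^{-1}A_{\mathcal{F}})P^{-1/2}$ is similar to $P^{-1}A_{\mathcal{F}}$, the two have the same (positive) spectrum, so $\kappa(\hat A) = \kappa_P$.

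The iteration count then follows by the same calculation as in Proposition~\ref{prop:warm}. Using $\ln(1/\rho)\ge 2/(\sqrt{\kappa_P}+1)$, the bound $2\rho^k \le \varepsilon$ holds once $k \ge \tfrac{\sqrt{\kappa_P}+1}{2}\ln(2/\varepsilon)$, which is $O(\sqrt{\kappa_P}\log\varepsilon^{-1})$. The remaining claims are comparisons of this bound with the unpreconditioned case. The bound improves exactly when $\kappa_P < \kappa(A_{\mathcal{F}})$, because the rate factor $(\sqrt{\kappa}-1)/(\sqrt{\kappa}+1)$ is increasing in $\kappa$. The choice $P_{\mathcal{F}} = A_{\mathcal{F}}$ gives $P^{-1}A_{\mathcal{F}} = I$, so $\kappa_P = 1$, the rate factor is zero, and CG terminates in one step.

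The main obstacle is the algebraic verification in the first step that the PCG recurrences are exactly CG on $\hat A$. Everything else is a direct application of results already stated. If the recurrence bookkeeping becomes heavy, I would instead argue through the Krylov-optimality characterisation together with the polynomial bound behind Proposition~\ref{prop:cg}.
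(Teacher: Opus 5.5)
Your proposal is correct and follows the paper's own route. The paper gives no separate proof. It justifies the proposition by identifying PCG with CG on the symmetrically scaled operator $P_{\mathcal{F}}^{-1/2}A_{\mathcal{F}}P_{\mathcal{F}}^{-1/2}$, whose spectrum coincides with that of $P_{\mathcal{F}}^{-1}A_{\mathcal{F}}$, and then invoking Proposition~\ref{prop:cg}. You supply the details it leaves implicit (the recurrence substitution, the invariance $\lVert \hat e_k\rVert_{\hat A} = \lVert e_k\rVert_{A_{\mathcal{F}}}$, and the iteration count), and you correctly read ``helps exactly when'' as a claim about the bound rather than about actual iteration counts.
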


\paragraph{Preconditioning on a changing free set.}
Two constraints decide which preconditioners suit this method, both inherited from
Section~\ref{sec:reduction}. First, the apply must be \emph{matrix-free}. Forming
$A_{\mathcal{F}}$ or a dense factor to build or invert $P_{\mathcal{F}}$ would forfeit the
$O(m\,|\mathcal{F}|)$ operator and the absence of $O(n^2)$ storage that motivate the whole
construction. Second, and particular to the active-set loop, the free set $\mathcal{F}$
changes between outer steps. Algorithm~\ref{alg:elim} rebuilds the operator on each reduced
$\mathcal{F}$, so a preconditioner is cheap only if it \emph{restricts} to the new free set at
negligible cost. The standard choices sort by exactly this property.

The Jacobi preconditioner $P = \operatorname{diag}(A)$ restricts exactly,
$(\operatorname{diag} A)_{\mathcal{F}} = \operatorname{diag}(A_{\mathcal{F}})$, so
$P_{\mathcal{F}}$ is the free-set slice of a single vector computed once. For $A = M^\top M$
that vector is the squared column norms of $M$, available without forming $A$, and its apply
is $O(|\mathcal{F}|)$. It carries no per-step setup and is valid on every free set the loop
visits, the natural default when $A$ has a widely varying diagonal.

A factorised preconditioner (incomplete Cholesky, $P = LL^\top$) does \emph{not} restrict. In
general $(P_{\mathcal{F}})^{-1} \neq (P^{-1})_{\mathcal{F}}$, and the factor of the full $P$ is
not a factor of $P_{\mathcal{F}}$, so honouring $A_{\mathcal{F}}$ would require refactorising
on each free set, reintroducing the assembly the method avoids. Such preconditioners pay off
here only once the free set stabilises, as it does near convergence and across the
support-stable warm starts of Proposition~\ref{prop:warm}, where one factor serves many outer
steps.

Between these sits the low-rank-plus-diagonal preconditioner
$P = \operatorname{diag}(d) + U\Delta U^\top$ built from a few ($r$) dominant eigenpairs of
$A$~\cite{saad2003iterative}. It restricts consistently in its \emph{apply},
$P_{\mathcal{F}} = \operatorname{diag}(d)_{\mathcal{F}} + U_{\mathcal{F}}\Delta
U_{\mathcal{F}}^\top$, and is inverted matrix-free in $O(|\mathcal{F}|\,r^2 + r^3)$ by the
Woodbury identity, precisely the direct free-block solve for the diagonal-plus-low-rank case.
It captures the leading spectral structure a diagonal misses. When $A$ is itself
diagonal-plus-low-rank the preconditioner \emph{is} $A$, giving $\kappa_P = 1$ and a direct
inner solve that bypasses CG altogether.

The \texttt{nncg} package realises this low-rank-plus-diagonal preconditioner by randomized
Nyström sketching, in a resketch-per-block variant (\texttt{Nystrom}) and a
sketch-once-and-mask variant (\texttt{GlobalNystrom}) that trade setup cost against reuse
exactly as the factorised-preconditioner case above; the global sketch amortises across the
support-stable warm-started re-solves of Proposition~\ref{prop:warm}. Appendix~\ref{app:precond}
tests both Jacobi and Nyström preconditioning and gives the Nyström construction.

\paragraph{Stopping criterion.}
PCG naturally monitors the preconditioned residual norm $(r^\top P_{\mathcal{F}}^{-1}
r)^{1/2}$, whereas the transfer of finite termination to inexact solves
(Lemma~\ref{lem:inexact}) is stated for the ordinary residual $\rho = \lVert b_{\mathcal{F}} -
A_{\mathcal{F}}\tilde x_{\mathcal{F}}\rVert_2$ that enters condition~\eqref{eq:inexactcond}.
The two differ by a factor between $\lambda_{\min}(P_{\mathcal{F}})^{1/2}$ and
$\lambda_{\max}(P_{\mathcal{F}})^{1/2}$. To keep the guarantee intact, the inner stop is
evaluated on the unpreconditioned $\rho$ (or the preconditioned tolerance is tightened by
$\kappa(P_{\mathcal{F}})^{1/2}$). Lemma~\ref{lem:inexact} then applies word for word, because
preconditioning changes only how fast $\rho$ falls, leaving $A$, the residual $\rho$ itself,
and the decision margin $\mu$ untouched.

Two remarks connect the subsection to the rest of the paper. First, the regularising
split~\eqref{eq:regsplit} may itself be read as an implicit preconditioner. It changes the
problem, but toward a target the application deems statistically or structurally preferable, so
the conditioning gain comes at no modelling cost while additionally repairing rank deficiency.
Second, preconditioning is a lever the Krylov inner solver has that the projected-gradient
reference of Section~\ref{ssec:proj} effectively lacks. A scaled gradient step minimises $f$ in
the $P$-metric, but the orthant and simplex projections that make projected gradient cheap are
closed-form only in the Euclidean norm, and become non-negative quadratic programs under a
general $P$. Preconditioning therefore \emph{widens} the $\sqrt\kappa$ advantage of the
active-set/CG loop over projected gradient rather than closing it.

%% file: sections/s7_results.tex
\section{Complexity and Numerical Behaviour}\label{sec:results}

\subsection{Complexity}\label{ssec:complexity}

Table~\ref{tab:complexity} collects the asymptotic cost of solving~\eqref{eq:bqp} by each
method, for $A = M^\top M$ with $M \in \mathbb{R}^{m\times n}$, condition number
$\kappa = \kappa(A_\alpha)$ after the regularising split, tolerance $\varepsilon$, free-set
size $k = |\mathcal{F}| \leq n$, and $s$ active-set outer steps. ``Extra memory'' is storage
beyond the operator; the matrix-free rows never form $A$ and need only $O(n)$ working memory.

\begin{table}[ht]
\centering
\footnotesize
\begin{tabular}{lcccc}
\toprule
Method & Per-step cost & Iterations & Extra memory & Forms $A$ \\
\midrule
Interior point (dense QP)          & $O(n^3)$        & $O(\log 1/\varepsilon)$          & $O(n^2)$ & Yes \\
Dense Cholesky $+$ active set       & $O(k^2 m)$      & $s$                             & $O(k^2)$ & Yes \\
CG matrix-free $+$ active set        & $O(km)$         & $s \cdot O(\sqrt\kappa)$        & $O(n)$   & No  \\
Factor / Woodbury $+$ active set     & $O(kr^2 + r^3)$ & $s$                             & $O(nr)$  & No  \\
Projected gradient (matrix-free)     & $O(nm)$         & $O(\kappa)$                     & $O(n)$   & No  \\
\bottomrule
\end{tabular}
\caption{Asymptotic cost of solving the non-negative quadratic~\eqref{eq:bqp} with
$A = M^\top M$, $M \in \mathbb{R}^{m \times n}$. Here $k = |\mathcal{F}| \leq n$ is the
free-set size, $s$ the number of active-set outer steps, $\kappa = \kappa(A_\alpha)$ the
condition number after the split~\eqref{eq:regsplit}, and $r$ the factor rank of a
diagonal-plus-low-rank operator. The last four rows share the active-set loop and differ only
in the backend: matrix-free CG and the Woodbury factor solve avoid assembling $A$, whereas
dense Cholesky forms and factorises $A_{\mathcal{F}}$. The CG ``Iterations'' column counts
$s \cdot O(\sqrt\kappa)$ inner iterations, dominated by the first solve at $k = n$; projected
gradient pays $O(\kappa)$ at the same per-step cost.}
\label{tab:complexity}
\end{table}

Two comparisons are structural. Within the active-set family, matrix-free CG and dense
Cholesky share the outer loop and differ only in whether $A_{\mathcal{F}}$ is assembled. The
$O(km)$-versus-$O(k^2 m)$ gap means CG pulls ahead as the free set grows, while dense Cholesky
can win when $k$ is small or $\kappa$ is large enough that CG needs many inner iterations
(the crossover is near $\kappa \sim k^2$). Between the two matrix-free methods, CG's
$O(\sqrt\kappa)$ dominates projected gradient's $O(\kappa)$ whenever $\kappa$ is large. Only
when the regularising split compresses $\kappa$ to $O(1)$ do the inner counts become
comparable, and then projected gradient's loop-free simplicity is attractive.

The equality-augmented problem~\eqref{eq:eqp} multiplies the inner cost of an outer step by the
$p+1$ right-hand sides that share the operator $A_{\mathcal{F}}$
(Section~\ref{sec:reduction}), plus an $O(p^3)$ dense Schur solve that is negligible for the
small $p$ of typical balance conditions. The asymptotics in $n$, $m$, and $\kappa$ are
otherwise those of Table~\ref{tab:complexity}.

\subsection{A controlled test family}\label{ssec:synthetic}

We test the propositions on a controlled family with a known optimum. Take
$A = Q\Lambda Q^\top$ with $Q$ a random orthogonal matrix and a geometric spectrum
$\Lambda = \mathrm{diag}(\lambda_1,\dots,\lambda_n)$ on $[1,\kappa]$ of prescribed condition
number $\kappa$ (equivalently $A = M^\top M$ for a random $M$ with the chosen singular values).
Plant a non-negative solution $x^\star \geq 0$ of a chosen support by setting
$b = A x^\star - s^\star$ for a complementary $s^\star \geq 0$. Then $(x^\star, s^\star)$ is
the exact solution of $\mathrm{LCP}(A,\,{-b})$ in closed form. The runs below use $n=200$
(support $50\%$) averaged over five seeds, with the smaller $n=120$ where the projected-gradient
$O(\kappa)$ cost forces a capped $\kappa$ range. All are reproduced by a self-contained
NumPy script that implements Algorithm~\ref{alg:elim}, the CG inner solve, and the reference
method directly from their definitions. The matrix-free deblurring study is likewise
NumPy-only. The external-solver benchmark below and the \texttt{shaw}/\texttt{phillips}
ill-posed study are two further scripts whose only additional dependency is SciPy (and, for the
benchmark, Clarabel). The warm-started efficient-frontier study reads the committed S\&P~500
returns panel and adds pandas. The algorithms are also released as the
open-source package \texttt{nncg} (\url{https://github.com/Jebel-Quant/nncg}), whose test
suite is this study. Planted recovery, the equality-augmented solve, trajectory agreement
under inexact solves, the warm-started frontier's single-step property, the
\texttt{shaw}/\texttt{phillips} regularisation and recovery runs, the matrix-free deblurring
solve, and the adversarial family that forces the fallback all run as continuous-integration
checks. The geometric spectrum is
the honest choice: its clustering lets CG converge faster than the worst-case bound, so the
measured growth is an \emph{upper}-bounded confirmation of Proposition~\ref{prop:cg}, not a
fitted exponent.

\begin{table}[ht]
\centering
\footnotesize
\input{tables/nncg_synthetic}
\caption{Active-set solve of~\eqref{eq:bqp} on the synthetic family ($n=200$, support $50\%$,
mean over five seeds). Outer steps stay in single digits, and the total CG inner count grows
sublinearly in $\kappa$ (slope $\nncgCGslope$ in $\log$--$\log$, $R^2 = \nncgCGrtwo$, against
the $O(\sqrt\kappa)$ worst case). The Bland fallback is never triggered here, so the fast
block-pivot path certifies optimality alone. The solver recovers the planted $x^\star$ to
$\nncgMaxErr$.}
\label{tab:nncg_synthetic}
\end{table}

Three previous observations are now confirmed.

\emph{(i) Inner count is bounded by $\sqrt\kappa$; regularisation lowers it.}
Figure~\ref{fig:nncg_kappa} shows the total CG inner iterations rising with $\kappa$ but
staying under the $O(\sqrt\kappa)$ envelope of Proposition~\ref{prop:cg} (measured slope
$\nncgCGslope < \tfrac12$, the clustered spectrum buying CG its superlinear phase). The
regularising split~\eqref{eq:regsplit} with $R^\top R = I$ compresses $\kappa$ at rate
$O((1-\alpha)/\alpha)$ (Proposition~\ref{prop:regcond}) and drives the count down monotonically
as $\alpha$ increases (Figure~\ref{fig:nncg_reg}). The effect is largest in the rank-deficient
regime $m < n$, where only $\alpha > 0$ makes the system well posed
(Table~\ref{tab:nncg_rankdef} below).

\emph{(ii) Outer count is small and the fallback is dormant.}
Across $\kappa \in [10, 10^6]$ the active-set loop terminates in at most $\nncgMaxOuter$ outer
steps (Table~\ref{tab:nncg_synthetic}), adjusting the free set by a few indices per step and
reaching the correct support in a count that tracks its distance from the initial full set, not
$n$. The patience budget is never exhausted: the Bland fallback of
Theorem~\ref{thm:termination} guarantees termination but is not entered on generic data, exactly
the codimension-one argument of Section~\ref{sec:nonneg}.

\emph{(iii) CG beats projected gradient by a widening $\sqrt\kappa$ factor.}
Figure~\ref{fig:nncg_cgpg} plots both matrix-free methods at matched per-step cost. Both run
faster than their worst-case bounds on the clustered spectrum, but projected gradient's growth
exponent is about twice CG's, so their ratio grows as $\kappa^{\nncgRatioSlope}$ (essentially
the predicted $\sqrt\kappa$) from parity at $\kappa=10$ to $\nncgPGratio\times$ at
$\kappa = \nncgKappaMax$. This is the $O(\sqrt\kappa)$-versus-$O(\kappa)$ separation of
Propositions~\ref{prop:cg} and~\ref{prop:proj}, widening with $\kappa$ and closing under heavy
regularisation.

\begin{figure}[ht]
\centering
\begin{minipage}{0.32\textwidth}
  \centering
  \includegraphics[width=\linewidth]{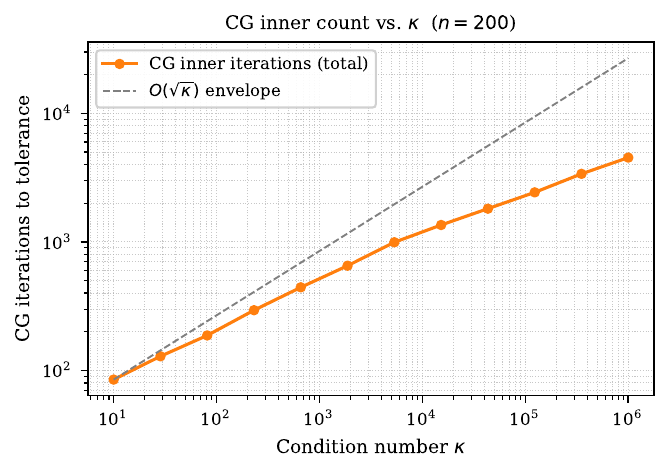}
  \caption{CG inner count vs.\ $\kappa$, under the $O(\sqrt\kappa)$ envelope.}
  \label{fig:nncg_kappa}
\end{minipage}\hfill
\begin{minipage}{0.32\textwidth}
  \centering
  \includegraphics[width=\linewidth]{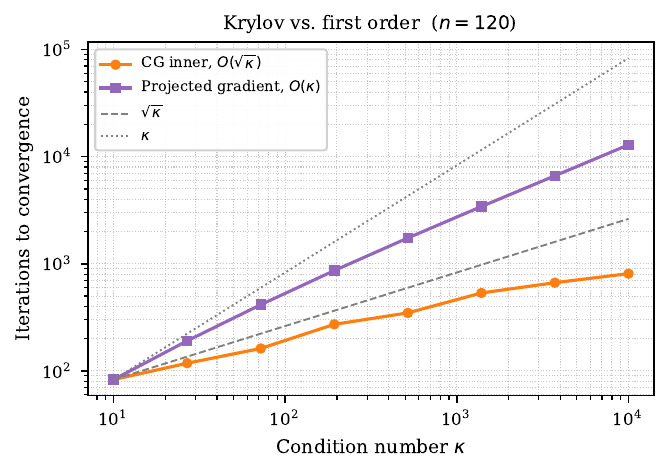}
  \caption{Krylov vs.\ first order: $O(\sqrt\kappa)$ vs.\ $O(\kappa)$.}
  \label{fig:nncg_cgpg}
\end{minipage}\hfill
\begin{minipage}{0.32\textwidth}
  \centering
  \includegraphics[width=\linewidth]{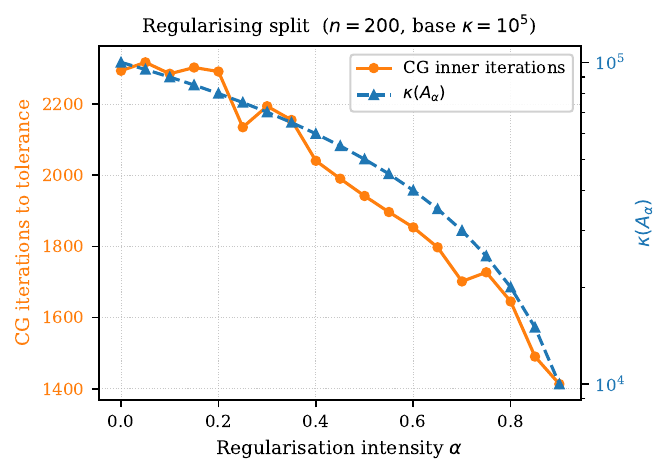}
  \caption{Regularising split: count falls as $\kappa(A_\alpha)$ shrinks.}
  \label{fig:nncg_reg}
\end{minipage}
\end{figure}

\subsubsection{Exercising the fallback}\label{sssec:fallback}

That the fallback lies dormant on generic data raises the
question of whether it is needed at all. The following family answers it. Let the columns of
$M$ arrive in near-anti-parallel pairs, $M = [\,M_0,\ -M_0 + \xi E\,]$, with small $\xi,$ random perturbation $E$, and a
ridge to keep $A = M^\top M + \lambda I$ a $P$-matrix. Pushing a variable to the bound flips
the sign of its partner's entry, so a batch drop systematically over-shoots and a later batch
add restores it. This is the codimension-one near-tie event of Section~\ref{sec:nonneg} made
systematic. On this family ($n = 20$, \nncgFbSeeds{} seeds), \emph{pure} block principal
pivoting (Algorithm~\ref{alg:elim} with unbounded patience and exact inner solves)
revisits a previously seen free set and cycles without terminating on \nncgFbCycles{} of
\nncgFbSeeds{} seeds. The guarded loop, exactly as stated (default patience, CG inner
solves), converges on all \nncgFbSeeds{}. On \nncgFbFired{} seeds the patience budget is
exhausted and up to \nncgFbMax{} least-index pivots are taken before the batch path resumes,
each exit certified by the KKT system to working tolerance. The guarantee of
Theorem~\ref{thm:termination} is therefore not decorative: on adversarial data the unguarded
fast path genuinely cycles, and the fallback is what terminates it.

\subsubsection{Rank deficiency}

Table~\ref{tab:nncg_rankdef} probes the semidefinite regime the
regularising split is claimed to repair: a Gram operator $A = M^\top M$ with
$M \in \mathbb{R}^{100\times 200}$. Every free block with $|\mathcal{F}| > m = 100$ is then
singular, including the initial one, since the loop starts from
$\mathcal{F} = \{1,\dots,n\}$. The failure at $\alpha = 0$ is instructive in its anatomy.
When the planted support lies \emph{below} the rank, the first inner solve can never meet its
tolerance: the residual stalls at the component of $b_{\mathcal{F}}$ outside the range of
$A_{\mathcal{F}}$, so the iteration cap is hit and the iterate carries no certificate. Yet
the uncertified signs may still identify the support, after which the shrunken free blocks
are regular. The loop then reaches the optimum on \nncgRankConvLow{} of five seeds (at an
order of magnitude more inner iterations) and fails to converge on the remainder. When the
optimal support \emph{exceeds} the rank, the final free block is itself singular, no
certificate is ever available, and the loop converges on \nncgRankConvHigh{} of five seeds.
Any $\alpha > 0$ restores the $P$-matrix premise of Theorem~\ref{thm:termination}, and with
it both reliability and speed: every run converges, recovering the planted optimum to
$\nncgRankOkErr$. The partial successes at $\alpha = 0$ sharpen the point rather than soften
it. What regularisation buys is not simply conditioning but the \emph{guarantee} as without it termination of the algorithm is luck.

\begin{table}[ht]
\centering
\footnotesize
\input{tables/nncg_rankdef}
\caption{The rank-deficient regime ($n = 200$, $m = 100$, five seeds; CG capped at $2000$
iterations per solve, $30$ outer steps). ``Capped solves'' counts runs whose first inner
solve hit the cap without meeting its tolerance. At $\alpha = 0$ the loop is unreliable below
the rank and fails uniformly above it; any $\alpha > 0$ converges on every seed with an order
of magnitude fewer inner iterations.}
\label{tab:nncg_rankdef}
\end{table}

\subsubsection{A recognised ill-posed problem}

The synthetic family is planted for control; to confirm
the same behaviour on a recognised instance, we take the \texttt{shaw} problem from Hansen's
Regularization Tools~\cite{hansen1994regutools,shaw1972}, a severely ill-posed one-dimensional
image-restoration model. Its exact solution, a sum of two Gaussians, is strictly positive and
hence a genuine non-negative least-squares target. Its symmetric kernel $M$ is numerically
rank-deficient, so the Gram operator $A = M^\top M$ ($n = \nncgReguN{}$) is numerically
singular, $\kappa(A) \approx \nncgReguKappa{}$. Adding $\nncgReguNoise{}$ relative noise to the
data makes the unconstrained least-squares solution oscillate negative, so non-negativity
binds as an active regulariser. Table~\ref{tab:nncg_regu} repeats the split sweep of
Table~\ref{tab:nncg_rankdef} on this operator. At $\alpha = 0$ the loop cannot certify a
solution: the singular free block leaves CG stalling on the inconsistent system, the outer
budget is exhausted, and the KKT residual stays at $O(10)$. Any $\alpha > 0$ restores the
$P$-matrix property, and the loop terminates in a handful of outer steps with a KKT
certificate. At the representative $\alpha = \nncgReguAlpha{}$ (which lowers $\kappa$ to
$\nncgReguKappaReg{}$) it converges in $\nncgReguOuter{}$ outer steps with $\nncgReguActive{}$
constraints active, agreeing with SciPy's Lawson--Hanson on the same regularised operator to
$\nncgReguErrLH{}$. The speed advantage over Lawson--Hanson is the one the synthetic benchmark
already records. The block loop reaches the $\nncgReguSupp{}$-nonzero solution in
$\nncgReguOuter{}$ outer steps, whereas Lawson--Hanson admits a single index per pivot and so
needs on the order of $\nncgReguSupp{}$ of them. On the regularised operator this is a
$\nncgBenchShawLHvsASCG{}\times$ wall-clock speedup at $n = \nncgBenchShawN{}$, at an identical
solution. The official problem tells the planted one's story: without the split,
termination is luck; with it, the guarantee holds. A second Regularization Tools
problem, \texttt{phillips}~\cite{hansen1994regutools,phillips1962} ($n = \nncgPhilN{}$), has a
known non-negative solution that is exactly zero outside a central interval, so the active set
is large and structured. The loop recovers the signal, certifies KKT optimality, and pins
$\nncgPhilActive{}$ variables at the bound against the $\nncgPhilTrueZeros{}$ true zeros,
agreeing with Lawson--Hanson to $\nncgPhilErrLH{}$.

\begin{table}[ht]
\centering
\footnotesize
\input{tables/nncg_regu}
\caption{The \texttt{shaw} ill-posed problem from Hansen's Regularization
Tools~\cite{hansen1994regutools} ($n = \nncgReguN{}$, $\nncgReguNoise{}$ relative data noise;
CG capped at $2000$ iterations per solve, $30$ outer steps). ``Active'' counts components
pinned at the bound. At $\alpha = 0$ the singular Gram operator
($\kappa \approx \nncgReguKappa{}$) leaves the loop uncertified; any $\alpha > 0$ terminates
with a KKT certificate, matching Lawson--Hanson on the regularised operator.}
\label{tab:nncg_regu}
\end{table}

\subsection{Matrix-free operation at scale}\label{ssec:matrixfree}

The problems so far are small enough to store $A$ densely. The
method's reason for existing is the regime where that is impossible. A separable Gaussian blur
$B = K \otimes K$ on an $N \times N$ image, applied as $X \mapsto K X K^\top$, gives a
non-negative deconvolution~\cite{hansen2006deblurring} whose Gram operator $A = B^\top B$ has
$n = N^2$ unknowns. The matrix-free loop (the operator solver of Section~\ref{sec:reduction},
reaching $A$ only through $v \mapsto B^\top(Bv)$) never forms it. At $N = \nncgDeblurNside{}$,
so $n = \nncgDeblurDim{}$, a dense $A$ would occupy $\nncgDeblurDenseGB{}$\,GB. With a ridge
split for the $P$-matrix property, the loop instead runs in $O(n)$ working memory, terminating
in $\nncgDeblurOuter{}$ outer steps ($\nncgDeblurInner{}$ CG iterations total) with a KKT
certificate and $\nncgDeblurActive{}$ of $\nncgDeblurDim{}$ pixels pinned at the non-negativity
bound (Figure~\ref{fig:nncg_deblur}). Identical loop, identical proof; only the operator
changes.

\begin{figure}[ht]
\centering
\includegraphics[width=\linewidth]{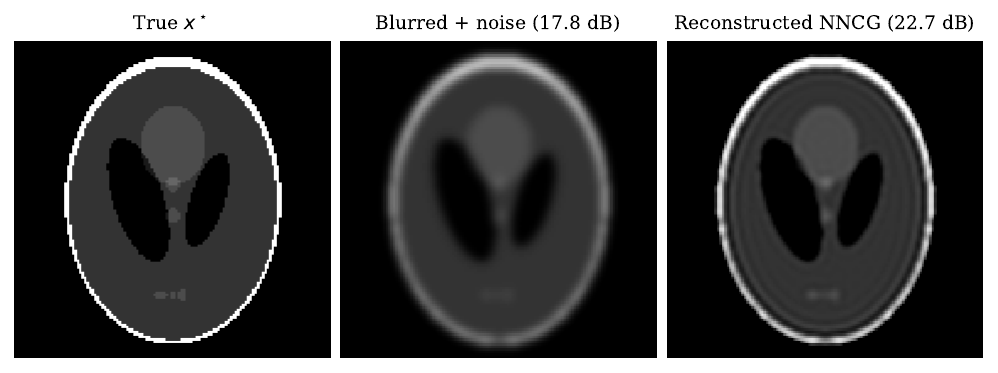}
\caption{Matrix-free non-negative deblurring of the Shepp--Logan phantom at
$N = \nncgDeblurNside{}$ ($n = \nncgDeblurDim{}$ unknowns): the true image, its Gaussian blur
with additive noise, and the reconstruction from the matrix-free active-set loop
(Algorithm~\ref{alg:elim}), which reaches $A = B^\top B$ only through $v \mapsto B^\top(Bv)$ and
never forms it, a dense $A$ would be $\nncgDeblurDenseGB{}$\,GB. The solve lifts the image from
$\nncgDeblurPSNRblur{}$ to $\nncgDeblurPSNR{}$\,dB PSNR and pins the $\nncgDeblurActive{}$
black-background pixels exactly at the non-negativity bound.}
\label{fig:nncg_deblur}
\end{figure}

\subsection{Inexactness and general equality constraints}\label{ssec:robustness}


In line with Lemma~\ref{lem:inexact}, running the loop with its
CG inner solver ($\varepsilon_{\mathrm{cg}} = 10^{-10}$, $\varepsilon = 10^{-8}$) against a
twin loop whose inner solver is an exact direct solve produces \emph{identical} free-set
trajectories on all \nncgTrajTotal{} planted instances
($\kappa \in \{10^2, 10^4, 10^6\}$, five seeds each). The guarantee of
Theorem~\ref{thm:termination} survives the inexact solves it is implemented with, not merely
in principle but decision-for-decision.


The same loop with the Schur-complement elimination of
Section~\ref{sec:reduction} solves the equality-augmented problem~\eqref{eq:eqp} for a general
$Bx = c$. On planted instances with $B \in \mathbb{R}^{p\times n}$ of full row rank, the solver
recovers the optimum to $\nncgEqErr$ and meets the equality to machine precision for
$p \in \{1, 3, 10\}$, with the inner cost scaling as the $p+1$ shared right-hand sides
anticipated in Section~\ref{sec:reduction}. The case $p=1$ reproduces the single-normalisation
solve exactly.

\subsection{Warm-starting a parametric sweep}\label{ssec:frontier}


The parametric sweep a practitioner
actually runs is the long-only efficient frontier itself. On the S\&P~500 (\nncgFrontN{}
assets, daily returns \nncgFrontStart{}--\nncgFrontEnd{}) we fit a rank-\nncgFrontK{} factor
covariance $\Sigma = D + L L^\top$---the diagonal-plus-low-rank operator of
Table~\ref{tab:complexity}, whose \nncgFrontK{} statistical factors carry
\nncgFrontSysFrac{}\% of the variance---and trace the frontier
$\min_{w\ge0,\,\mathbf{1}^\top w = 1}\tfrac12 w^\top\Sigma w - \gamma\,\mu^\top w$ over
\nncgFrontGammas{} values of the risk-aversion $\gamma$, the equality-augmented problem of
Section~\ref{sec:reduction} with $B = \mathbf{1}^\top$. The operator is reached only through
$v \mapsto D v + L(L^\top v)$, so $\Sigma$ is never formed and the matrix-free CG inner solve
of Proposition~\ref{prop:cg} is the natural one. As $\gamma$ grows the holding concentrates
from \nncgFrontHeldMax{} names toward \nncgFrontHeldMin{}, so adjacent frontier points share
most of their support. Warm-starting each solve from the previous $\gamma$'s $(\mathcal{F}, x)$
pair cuts the mean outer count from \nncgWarmColdOuter{} to \nncgWarmOuter{} and the total CG
inner count by a factor of \nncgWarmInnerX{} (Figure~\ref{fig:nncg_frontier}), a
\nncgWarmX{}$\times$ wall-clock speedup at an identical optimum (max deviation \nncgWarmErr{}).
The support-stable case of Proposition~\ref{prop:warm} is confirmed on live data: of the
\nncgWarmStableTot{} steps, \nncgWarmStable{} leave the optimal support unchanged, and
\emph{every one} is solved in a single warm-started outer step. The gain is a property of the
outer loop, so it is independent of the inner solver---the same reduction attends the direct
free-set solve. MPRGP does not enter this comparison at all: the budget $\mathbf{1}^\top w = 1$
is an equality constraint outside its bound-only scope, reachable only by wrapping it in an
outer augmented-Lagrangian loop~\cite{dostal2006smalbe}, whereas the active-set loop absorbs it
directly through the Schur-complement elimination of Section~\ref{sec:reduction}. The long-only
frontier is thus not a problem on which the two methods compete---it is one the feasible-point
family cannot pose.

\begin{figure}[ht]
\centering
\includegraphics[width=\textwidth]{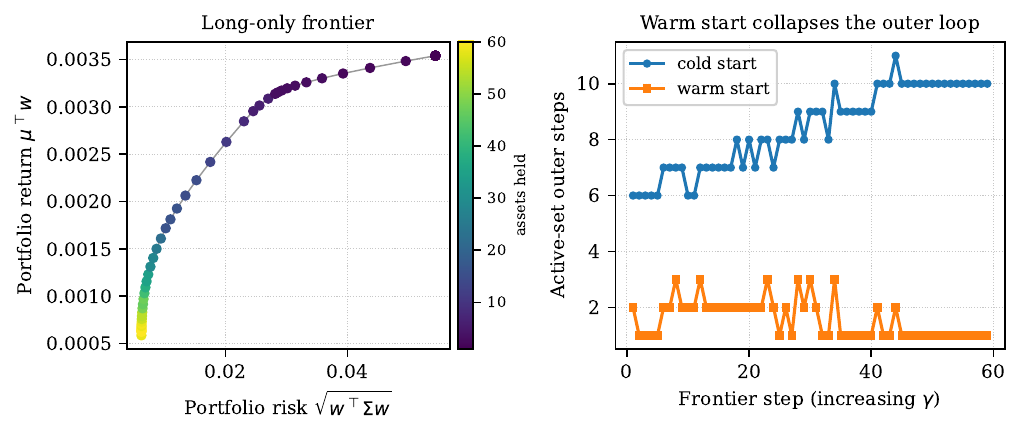}
\caption{Warm-starting the long-only efficient frontier on a matrix-free factor covariance
$\Sigma = D + L L^\top$ (S\&P~500, $n = \nncgFrontN{}$). \emph{Left}: the frontier, each
$\gamma$ coloured by the number of names held, concentrating from the min-variance support
toward a single asset. \emph{Right}: active-set outer steps per frontier point, cold versus
warm-started from the previous $\gamma$. The reduction is a property of the outer loop, hence
independent of the inner solver; the warm run pays only for the support drift
(Proposition~\ref{prop:warm}).}
\label{fig:nncg_frontier}
\end{figure}


Because the factor operator is never assembled, the
sweep runs in $O(nK)$ memory where a dense $\Sigma$ cannot be formed. Calibrating the fitted
model to a synthetic universe of $n = \nncgFrontMaxN{}$ assets---a dense $\Sigma$ of
\nncgFrontMaxGB{}\,GB---the warm-started sweep traces the entire frontier in under a second, a
\nncgFrontMaxX{}$\times$ speedup over the cold sweep (Table~\ref{tab:nncg_frontier}), in the
regime where the assembled solvers of the external benchmark cannot instantiate the operator at
all. The efficient frontier is one representative parametric family; the same warm-started loop
applies to any slowly varying sequence of non-negative quadratic programs---rolling
rebalancing, regularisation-path sweeps, scenario families---which we do not pursue further
here.

\begin{table}[ht]
\centering
\footnotesize
\input{tables/nncg_frontier}
\caption{Warm-started matrix-free frontier on a calibrated factor universe: cold versus
warm-started wall-clock (s) for the whole \nncgFrontGammas{}-point sweep against problem size
$n$, with the storage a dense $\Sigma$ would need but the factor operator never forms. Applied
in $O(nK)$; the direct solvers of the external benchmark cannot factorise $\Sigma$ at these
sizes.}
\label{tab:nncg_frontier}
\end{table}

\subsection{Comparison with external solvers}\label{ssec:external}
\paragraph{Dense, assembled problems}

The comparisons so far are internal, i.e. active-set variants against
one another and against projected gradient. We close the section by placing
Algorithm~\ref{alg:elim} among the external solvers a practitioner would actually reach for.
This dense, explicitly assembled family is chosen to make that placement fair to the
baselines, not to exhibit the matrix-free regime the method is built for. On it the same
planted family is solved by SciPy's Lawson--Hanson NNLS~\cite{lawson1974} (fed the Cholesky
factor $M = L^\top$, factorisation included in the timing) and the interior-point solver
Clarabel~\cite{clarabel2024}, against Algorithm~\ref{alg:elim} with a direct and with the
matrix-free CG inner solve. Figure~\ref{fig:nncg_bench} reports wall-clock times against $n$
at $\kappa = 10^4$; Table~\ref{tab:nncg_bench} reports the $\kappa$-sensitivity at $n = 500$
and the accuracy of every solver against the planted optimum. Three observations follow.
First, the active-set loop dominates both baselines on this family, at $n = \nncgBenchRatioN$
by a factor of \nncgBenchLHvsASD{} over Lawson--Hanson and \nncgBenchIPvsASD{} over Clarabel
(direct inner solve). The reason is structural (Section~\ref{sec:nonneg}): block pivoting
reaches the optimal support in single-digit outer steps, where Lawson--Hanson moves one index
per iteration and the interior point pays its Newton solves on the full $n \times n$ system.
Second, the two inner solvers answer different questions, and this benchmark is the one that
favours the direct block solve. It is faster in single digits at $n = 500$ (dense BLAS-3
factorisations beat memory-bound mat-vecs at these sizes) and is $\kappa$-insensitive, whereas
the CG inner cost grows as $\sqrt\kappa$ (Table~\ref{tab:nncg_bench}: a factor
\nncgBenchASCGSens{} from $\kappa = 10^2$ to $10^6$). This is by design, not a deficit. Where
$A$ is small and dense enough to assemble and factorise, the direct solve should win, and the
active-set loop (the paper's actual object) accommodates it unchanged: Theorem~\ref{thm:termination}
governs the exact inner solve, and Lemma~\ref{lem:inexact} carries the guarantee to the
inexact CG one. The matrix-free CG inner solve earns its $\sqrt\kappa$ rate and $O(n)$ working
memory only in the complementary regime, namely the Gram and structured operators of
Section~\ref{sec:reduction} that are never assembled, and warm-started parametric sweeps where
each solve reuses the previous free set. Third, every solver recovers the planted optimum:
Lawson--Hanson and the direct active set to $10^{-13}$, CG to its residual tolerance
($10^{-8}$), and Clarabel to its default interior-point tolerance ($10^{-4}$). The active-set
methods certify KKT optimality exactly rather than to a duality-gap tolerance.

\begin{figure}[ht]
\centering
\includegraphics[width=0.55\textwidth]{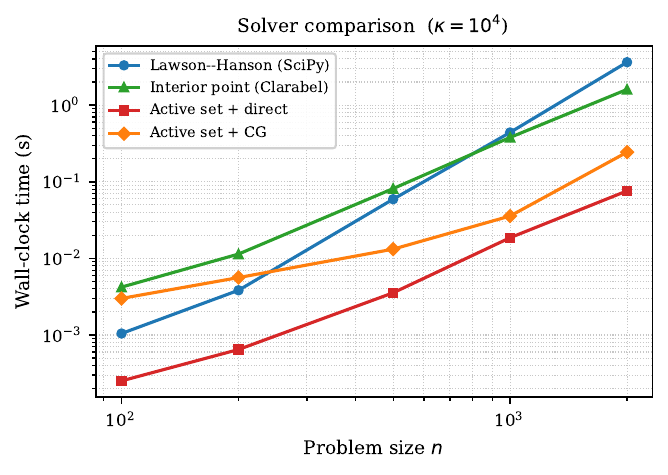}
\caption{Wall-clock time against problem size $n$ on the planted family
($\kappa = 10^4$, support $50\%$, mean over seeds): SciPy's Lawson--Hanson NNLS and the
Clarabel interior-point solver against Algorithm~\ref{alg:elim} with direct and matrix-free
CG inner solves.}
\label{fig:nncg_bench}
\end{figure}

\begin{table}[ht]
\centering
\footnotesize
\input{tables/nncg_bench}
\caption{$\kappa$-sensitivity at $n = 500$ (mean over three seeds) and accuracy against the
planted optimum. The direct methods are $\kappa$-insensitive at fixed $n$; only the CG inner
cost grows, as Proposition~\ref{prop:cg} prescribes. Accuracy reflects each solver's own
stopping criterion: the active-set exits are KKT certificates, the interior point stops at its
duality-gap tolerance.}
\label{tab:nncg_bench}
\end{table}

\paragraph{Matrix-free at scale}

The matrix-free deblurring family of Section~\ref{ssec:matrixfree} places the method against
four external solvers, split by what each needs of the operator. Lawson--Hanson, Clarabel, and
OSQP~\cite{osqp} need the explicit Gram operator $A = B^\top B$, available here in
closed form as $A_\alpha = (1-\alpha)(G \otimes G) + \alpha I$ with $G = K^\top K$, so they pay
$O(n^2)$ storage and an $O(n^3)$ factorisation. MPRGP~\cite{dostalschoberl2005}, the
feasible-point competitor of Section~\ref{sec:nonneg}, is matrix-free like the active-set loop
and pays neither. Table~\ref{tab:nncg_deblur} and Figure~\ref{fig:nncg_deblur_bench} report
wall-clock against $n$. At $n = \nncgDeblurHeadDim{}$ ($N = \nncgDeblurHeadNside{}$), the
largest size the dense solvers still handle, the active-set loop finishes in
$\nncgDeblurHeadMFTime{}$\,s against $\nncgDeblurHeadLHTime{}$\,s for Lawson--Hanson and
$\nncgDeblurHeadIPTime{}$\,s for Clarabel, a $\nncgDeblurHeadLHx{}\times$ and
$\nncgDeblurHeadIPx{}\times$ margin. Past that size the dense operator is the binding
constraint: at $N = \nncgDeblurMaxNside{}$ ($n = \nncgDeblurMaxDim{}$) it would occupy
$\nncgDeblurMaxDenseGB{}$\,GB, so none of the three dense solvers can even be instantiated,
whereas the active-set loop solves the problem in $\nncgDeblurMaxTime{}$\,s in $O(n)$ working
memory. This is the regime the method is built for. The win is not a constant factor on
assembled problems but a solver that runs at all once $A$ cannot be formed. The comparison also
makes the enabling property explicit rather than proprietary: MPRGP is matrix-free too, clears
the same wall, and on this well-conditioned blur is in fact the faster of the two ($O(n)$
memory, $\nncgDeblurMaxMPRGPTime{}$\,s at the largest size). What the family isolates is
matrix-free operation, which the active-set loop shares; the choice between it and MPRGP then
turns on the properties of Section~\ref{sec:nonneg} (an exact KKT certificate at termination,
and a free-set structure that carries a direct inner solve and warm starts) rather than on raw
speed here.

\begin{figure}[ht]
\centering
\includegraphics[width=0.55\textwidth]{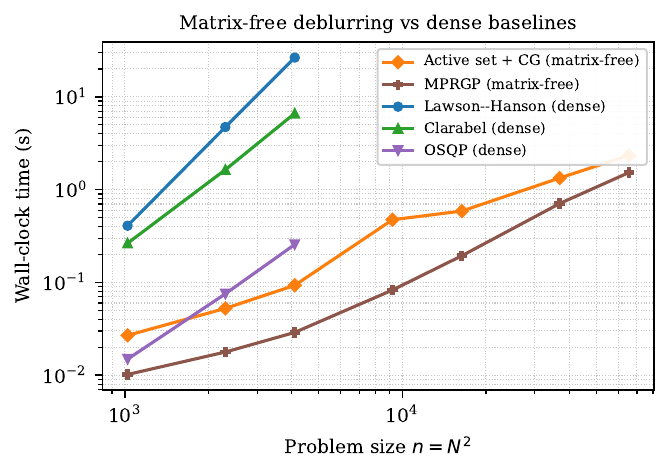}
\caption{Wall-clock time against problem size $n = N^2$ on the matrix-free deblurring family:
the two matrix-free methods (the active-set CG loop and MPRGP) against dense Lawson--Hanson,
Clarabel, and OSQP. The dense baselines stop where the Gram matrix $A = B^\top B$ ceases to be
practical to form and factorise; the matrix-free methods continue to $n = \nncgDeblurMaxDim{}$
(a dense $A$ would be $\nncgDeblurMaxDenseGB{}$\,GB) in $O(n)$ working memory. Both matrix-free
methods run unpreconditioned.}
\label{fig:nncg_deblur_bench}
\end{figure}

\begin{table}[ht]
\centering
\footnotesize
\input{tables/nncg_deblur}
\caption{Matrix-free deblurring: wall-clock (s) against problem size $n = N^2$, the two
matrix-free methods (MPRGP and the active-set CG loop) versus the dense Lawson--Hanson,
Clarabel, and OSQP (Gaussian blur $\sigma = 2$, ridge $\alpha = 10^{-3}$). ``dense $A$ (GB)''
is the storage the dense solvers would need for the Gram operator, which the matrix-free
methods never form; ``---'' marks sizes where that operator exceeds the benchmark's storage
budget and the dense solvers are not run. The active-set CG loop is run without a
preconditioner, as is MPRGP.}
\label{tab:nncg_deblur}
\end{table}

\paragraph{What a feasible-point method cannot certify}

The timing above conceded that MPRGP can be the faster matrix-free solver on a well-conditioned
operator. Speed, however, is not what separates the two methods. The active-set loop returns an
\emph{exact} KKT certificate, it identifies the optimal active set combinatorially and returns
a dual-feasible, complementary $(x, s)$ pair to machine precision, whereas MPRGP, a
feasible-point method, only drives the KKT residual below a requested tolerance and never
certifies the active set. On near-degenerate problems the gap is stark. We plant a
bound-constrained quadratic ($n = \nncgCertN{}$, $\nncgCertSeeds{}$ seeds) whose optimum
carries a geometric ladder of primal and dual margins down to $\nncgCertMinMargin{}$, so
identifying the support means resolving ever-smaller gaps. The active-set loop with an exact
inner solve recovers the support exactly---zero misclassifications---with a KKT residual of
$\nncgCertASresid{}$ in $\nncgCertASouter{}$ outer steps. MPRGP, run to a practical tolerance of
$\nncgCertMPtolHi{}$, mislabels $\nncgCertMPerrHi{}$ of the $\nncgCertN{}$ variables and exits
dual-infeasible (most-negative reduced gradient $\nncgCertMPdualHi{}$); its support error and
dual infeasibility fall only as fast as the tolerance it is asked for
(Figure~\ref{fig:nncg_certificate}), matching the active-set support only near
$\nncgCertMPtolZero{}$ ($\nncgCertMPitZero{}$ iterations), and even there it emits no
certificate. This is the structural payoff of the complementary-basis loop, compounded by the
equality constraint of Section~\ref{ssec:frontier}: the budget $\mathbf{1}^\top x = c$ is
outside MPRGP's bound-only scope entirely.

\begin{figure}[ht]
\centering
\includegraphics[width=\textwidth]{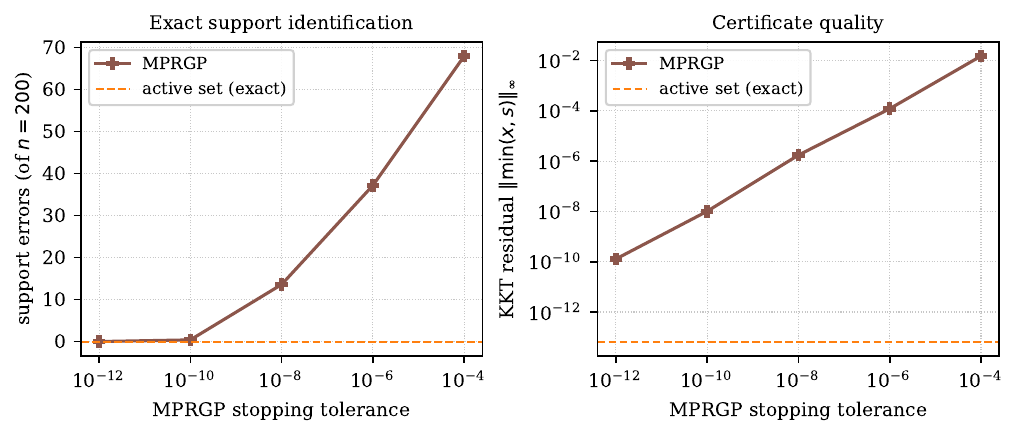}
\caption{Exact certificate versus feasible-point tolerance on a near-degenerate planted
bound-constrained QP ($n = \nncgCertN{}$, margins down to $\nncgCertMinMargin{}$,
$\nncgCertSeeds{}$ seeds). \emph{Left}: support-identification errors against the planted
optimum. \emph{Right}: the KKT natural residual $\|\min(x,s)\|_\infty$. The active-set loop
(dashed) is exact, zero support errors, residual $\nncgCertASresid{}$, in a fixed
$\nncgCertASouter{}$ outer steps; MPRGP's error and residual track its stopping tolerance, and
it never returns a dual-feasible certificate.}
\label{fig:nncg_certificate}
\end{figure}

\paragraph{A real, non-synthetic check at scale}

The external-baseline comparison above is on a planted family designed to be fair to
Lawson--Hanson and Clarabel alike; a genuine, over-complete hyperspectral unmixing problem
confirms the same ordering on real data at a scale where the matrix-free argument bites. We
unmix the Cuprite scene against the full USGS spectral library~\cite{clark2007},the standard
over-complete dictionary of the sparse-unmixing literature~\cite{iordache2011}, so each pixel
poses the fully-constrained
least-squares abundance problem $\min_x \tfrac12 x^\top A x - b_i^\top x$ s.t.\ $x \geq 0$,
$\mathbf{1}^\top x = 1$, $A = M^\top M$, now with $M$ holding $n = \nncgHsLibSignatures{}$
candidate mineral signatures over $\nncgHsLibBands{}$ bands. The Gram operator is
$\nncgHsLibSignatures{} \times \nncgHsLibSignatures{}$ of rank only $\nncgHsLibRank{}$, hence
rank-deficient; a ridge of $\nncgHsLibRidgeFrac{}$ restores the $P$-matrix premise
(Section~\ref{sec:conditioning}). At this $n$ Clarabel's per-pixel $O(n^3)$ factorisation is
the binding cost, exactly the regime Table~\ref{tab:complexity} anticipates and the one the
$n = 6$ Urban scene is far too small to exercise, so the active-set loop's matrix-free $O(km)$
step wins; FISTA is $\nncgHsLibSpeedupFista{}\times$ slower. Because neighbouring pixels share
their active dictionary members, raster-order warm starting cuts total outer steps from
$\nncgHsLibOuterCold{}$ to $\nncgHsLibOuterWarm{}$ ($\nncgHsLibSpeedupOuter{}\times$), a
$\nncgHsLibSpeedupWall{}\times$ wall-clock speedup, with $\nncgHsLibFracSingle{}\%$ of pixels
certified in a single warm-started outer step and cold and warm agreeing to $\nncgHsLibAgree{}$.
Most tellingly for the certificate, from an over-complete dictionary of
$\nncgHsLibSignatures{}$ candidates the loop identifies a mean of $\nncgHsLibSupportMean{}$
active signature per pixel (at most $\nncgHsLibSupportMax{}$): it recovers the exact,
single-mineral support with a KKT certificate, where a feasible-point method returns only an
approximate abundance vector left to be thresholded
(Figure~\ref{fig:nncg_hyperspectral_library}).

\begin{figure}[ht]
\centering
\includegraphics[width=\textwidth]{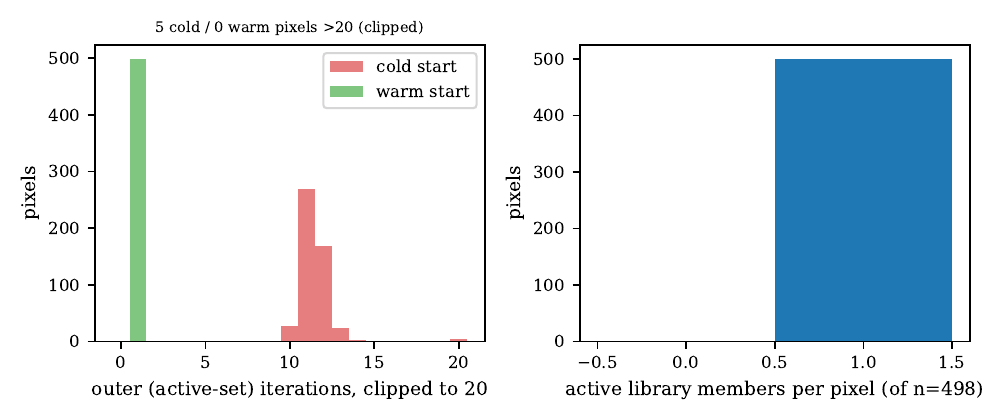}
\caption{Library-scale FCLS unmixing of the Cuprite scene against the USGS spectral library
($n = \nncgHsLibSignatures{}$ signatures, $\nncgHsLibBands{}$ bands, $\nncgHsLibPixels{}$
pixels). \emph{Left}: per-pixel active-set outer-iteration counts, cold versus warm-started;
warm starting collapses them, with $\nncgHsLibFracSingle{}\%$ of pixels solved in a single
outer step. \emph{Right}: active library members per pixel, the loop certifies a mean of
$\nncgHsLibSupportMean{}$, the exact single-mineral support the sparse-unmixing model expects.}
\label{fig:nncg_hyperspectral_library}
\end{figure}

%% file: tables/nncg_synthetic.tex
\begin{tabular}{rrrr}
\toprule
$\kappa$ & Outer steps $s$ & CG inner (total) & Fallback pivots \\
\midrule
$10^{1}$ & 3.0 & 85 & 0.00 \\
$10^{2}$ & 3.2 & 212 & 0.00 \\
$10^{3}$ & 4.2 & 541 & 0.00 \\
$10^{4}$ & 5.0 & 1180 & 0.00 \\
$10^{5}$ & 5.2 & 2294 & 0.00 \\
$10^{6}$ & 6.2 & 4529 & 0.00 \\
\bottomrule
\end{tabular}

%% file: tables/nncg_rankdef.tex
\begin{tabular}{rrcrrr}
\toprule
Support $k$ & $\alpha$ & Converged & CG inner & Capped solves & $\lVert x - x^\star\rVert_\infty$ \\
\midrule
50 & 0.00 & 4/5 & 2716 & 5 & $1\cdot10^{-9}$ \\
50 & 0.05 & 5/5 & 190 & 0 & $6\cdot10^{-10}$ \\
50 & 0.20 & 5/5 & 118 & 0 & $4\cdot10^{-10}$ \\
\midrule
150 & 0.00 & 0/5 & -- & 5 & -- \\
150 & 0.05 & 5/5 & 351 & 0 & $1\cdot10^{-9}$ \\
150 & 0.20 & 5/5 & 163 & 0 & $7\cdot10^{-10}$ \\
\bottomrule
\end{tabular}

%% file: tables/nncg_regu.tex
\begin{tabular}{lrcrrrr}
\toprule
$\alpha$ & $\kappa(A_\alpha)$ & certified & outer & CG inner & active & KKT resid. \\
\midrule
$0$ & $2\cdot10^{20}$ & no & 30 & 826 & 109 & $3\cdot10^{1}$ \\
$1\cdot10^{-6}$ & $8\cdot10^{7}$ & yes & 6 & 115 & 12 & $1\cdot10^{-9}$ \\
$1\cdot10^{-4}$ & $8\cdot10^{5}$ & yes & 3 & 48 & 4 & $2\cdot10^{-9}$ \\
$1\cdot10^{-2}$ & $8\cdot10^{3}$ & yes & 1 & 11 & 0 & $5\cdot10^{-10}$ \\
\bottomrule
\end{tabular}

%% file: tables/nncg_frontier.tex
\begin{tabular}{rrrrr}
\toprule
$n$ & dense $\Sigma$ (GB) & cold (s) & warm (s) & speedup \\
\midrule
2000 & 0.03 & 0.80 & 0.03 & 27$\times$ \\
8000 & 0.51 & 2.63 & 0.05 & 52$\times$ \\
20000 & 3.20 & 5.95 & 0.08 & 72$\times$ \\
\bottomrule
\end{tabular}

%% file: tables/nncg_bench.tex
\begin{tabular}{lrrrr}
\toprule
 & \multicolumn{3}{c}{Time (s), $n = 500$} & \\
\cmidrule(lr){2-4}
Method & $\kappa = 10^2$ & $\kappa = 10^4$ & $\kappa = 10^6$ & $\max\lVert x - x^\star\rVert_\infty$ \\
\midrule
Lawson--Hanson (SciPy) & 0.051 & 0.060 & 0.069 & $5\cdot10^{-13}$ \\
Interior point (Clarabel) & 0.079 & 0.090 & 0.107 & $5\cdot10^{-4}$ \\
Active set $+$ direct & 0.004 & 0.004 & 0.005 & $5\cdot10^{-13}$ \\
Active set $+$ CG & 0.003 & 0.014 & 0.066 & $7\cdot10^{-8}$ \\
\bottomrule
\end{tabular}

%% file: tables/nncg_deblur.tex
\begin{tabular}{rrrrrrrr}
\toprule
$N$ & $n = N^2$ & dense $A$ (GB) & LH (s) & Clarabel (s) & OSQP (s) & MPRGP (s) & NNCG (s) \\
\midrule
32 & 1024 & 0.01 & 0.41 & 0.26 & 0.01 & 0.01 & 0.03 \\
48 & 2304 & 0.04 & 4.72 & 1.64 & 0.08 & 0.02 & 0.05 \\
64 & 4096 & 0.13 & 26.48 & 6.63 & 0.26 & 0.03 & 0.09 \\
96 & 9216 & 0.68 & --- & --- & --- & 0.08 & 0.47 \\
128 & 16384 & 2.15 & --- & --- & --- & 0.19 & 0.59 \\
192 & 36864 & 10.87 & --- & --- & --- & 0.71 & 1.33 \\
256 & 65536 & 34.36 & --- & --- & --- & 1.53 & 2.34 \\
\bottomrule
\end{tabular}

%% file: sections/s8_conclusions.tex
\section{Conclusions}

Conjugate gradients solves SPD systems; it does not, unaided, respect $x \geq 0$. This paper
supplies the missing layer. The bound-constrained quadratic $\min_{x\geq0}
\tfrac{1}{2}x^\top A x - b^\top x$, and its least-squares and equality-augmented ($Bx = c$)
variants, reduces on any fixed free set to an unconstrained SPD solve that CG performs
matrix-free. Non-negativity is enforced around it by a primal-dual active-set loop whose asset
toggles are the principal pivots of $\mathrm{LCP}(A,\,{-b})$. Because $A \succ 0$ is a
$P$-matrix, guarding a fast block-pivot path with a least-index Bland fallback yields
unconditional finite termination at the unique global minimiser (Theorem~\ref{thm:termination}),
even under degeneracy. The inner solver retains CG's $O(\sqrt\kappa)$ Krylov rate,
a factor $\sqrt\kappa$ ahead of the loop-free projected-gradient reference method. A
regularising split $A_\alpha = (1-\alpha)A + \alpha R^\top R$ simultaneously compresses
$\kappa$ and secures the $P$-matrix property, so it both accelerates the inner solve and
licenses the termination proof.

The construction is deliberately application-agnostic: it takes an SPD operator (or a
rectangular $M$ with $A = M^\top M$), a right-hand side, and an optional linear equality
system $Bx = c$, and returns the non-negative minimiser. Handling $Bx = c$ directly
distinguishes the method from the bound-constrained feasible-point solvers (MPRGP and its kin),
which reach a linear equality only through an outer augmented-Lagrangian
loop~\cite{dostal2006smalbe}: the long-only portfolio with a budget or factor-neutrality
constraint is a problem they cannot pose but this loop solves as a single instance. The trade
is genuine, and runs the other way too: the method treats only the non-negative orthant
$x \geq 0$, not the general two-sided box $\ell \leq x \leq u$ those methods support, which
would need a mixed-complementarity formulation we do not develop. The guarantees and complexity
recorded here hold for any SPD $A$. A reference implementation is available as the open-source package \texttt{nncg}
(\url{https://github.com/Jebel-Quant/nncg}), whose test suite reproduces the synthetic study
of Section~\ref{sec:results}.

%% file: sections/s9_appendix.tex
\section{Preconditioning experiments}\label{app:precond}

\paragraph{Jacobi.}
Section~\ref{sec:conditioning}'s claim that a preconditioner slots into the loop unchanged is
tested on operators with a well-conditioned core and a badly scaled diagonal,
$A = D^{1/2}(Q\Lambda Q^\top)D^{1/2}$ with $\kappa(Q\Lambda Q^\top) = 10^2$ and the entries of
$D$ spread over four decades. Jacobi-preconditioned CG inside Algorithm~\ref{alg:elim} runs at
the core's conditioning regardless of the spread (\nncgPcgPCG{} inner iterations against
\nncgPcgBase{} for the unscaled operator), while plain CG pays for the scaling in full
($\nncgPcgCG$ iterations at $\kappa(A) \approx \nncgPcgKappa$)---a factor of $\nncgPcgRatio$
recovered by a preconditioner that costs one vector division per iteration.

\paragraph{Randomized Nyström.}
The low-rank-plus-diagonal preconditioner of Section~\ref{ssec:precond} is built in \texttt{nncg}
by randomized Nyström sketching~\cite{halko2011}, $A \approx U\Lambda U^\top + \mu(I - UU^\top)$
for an orthonormal $U \in \mathbb{R}^{n \times r}$, captured eigenvalues $\Lambda$, and a scalar
tail shift $\mu$; this is the constant-diagonal case $d = \mu\mathbf{1}$,
$\Delta = \Lambda - \mu I$ of that preconditioner. It comes in two variants that trade off
exactly where the free set changes. The first, \texttt{Nystrom}, sketches the \emph{current}
free block $A_{\mathcal{F}}$ on every outer step, using a handful of matrix-free products
against $A_{\mathcal{F}}$ plus a QR and a small eigendecomposition. It is thus always built on
the operator the loop is about to solve, at the price of repeating that work whenever
$\mathcal{F}$ changes. The second, \texttt{GlobalNystrom}, sketches the \emph{full} operator $A$
once. It exploits that restricting a rank-$r$ factorisation to a principal submatrix is exact,
$(U\Lambda U^\top + \mu(I-UU^\top))_{\mathcal{F}} = U_{\mathcal{F}}\Lambda U_{\mathcal{F}}^\top +
\mu(I_{\mathcal{F}} - U_{\mathcal{F}} U_{\mathcal{F}}^\top)$ with $U_{\mathcal{F}} =
U[\mathcal{F},:]$. Every later free set then reuses the one global sketch by masking rows rather
than resketching. The masked $U_{\mathcal{F}}$ is no longer orthonormal, so its inverse needs the
general Sherman--Morrison--Woodbury identity rather than \texttt{Nystrom}'s specialised
identity-plus-projection form; it is built once per free set in $O(|\mathcal{F}|\,r + r^3)$ and
applied in $O(|\mathcal{F}|\,r)$ thereafter. \texttt{GlobalNystrom} pays its larger,
whole-operator sketch cost once, then amortises it across every outer step of a solve and across
the support-stable warm-started re-solves of Proposition~\ref{prop:warm}, exactly where a
resketched-per-block \texttt{Nystrom} pays repeatedly for work \texttt{GlobalNystrom} does once.

The sketch-once design is borne out empirically. On the planted spectral-gap family of the
\texttt{nncg} solver-comparison notebook ($\nncgPrecondN{}$ dominant-over-tail eigenvalues,
sketch rank $\nncgPrecondRank{}$, solved against $\nncgPrecondNSolves{}$ independent right-hand
sides sharing one operator and one solver instance per method, so \texttt{GlobalNystrom}'s cache
persists across solves as Proposition~\ref{prop:warm}'s warm-started re-solves would reuse it),
all of \texttt{CG}, \texttt{Jacobi}, \texttt{Nystrom}, and \texttt{GlobalNystrom} reach the
identical solution on every right-hand side (max disagreement $\nncgPrecondMaxError{}$), so only
the cost differs (Figure~\ref{fig:nncg_precond}). \texttt{GlobalNystrom}'s cumulative cost falls
below \texttt{Nystrom}'s by solve $\nncgPrecondCrossover{}$ and finishes
$\nncgPrecondFinalSpeedup{}\times$ ahead ($\nncgPrecondGlobalFinalMs{}$\,ms versus
$\nncgPrecondNystromFinalMs{}$\,ms), confirming that paying the larger whole-operator sketch once
is cheaper than resketching every free block once the operator is reused often enough.

\begin{figure}[ht]
\centering
\includegraphics[width=0.55\textwidth]{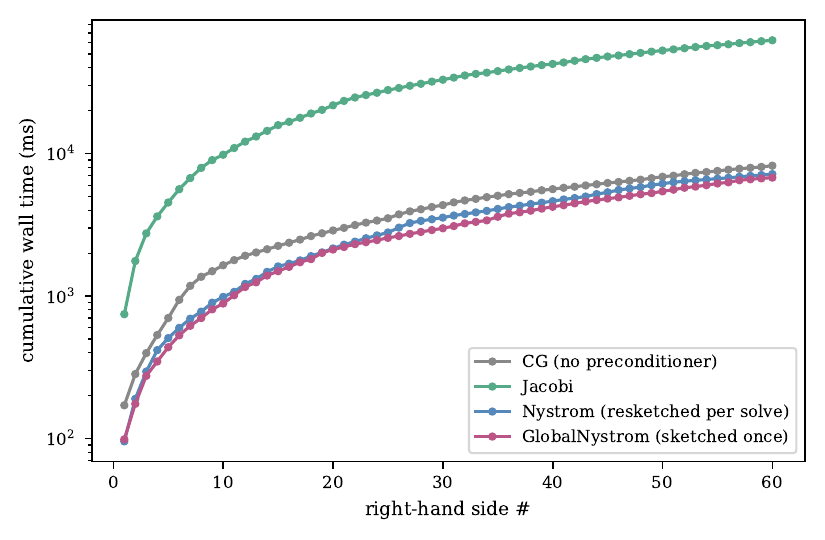}
\caption{Cumulative wall time over $\nncgPrecondNSolves{}$ repeated solves of the same
$n=\nncgPrecondN{}$ spectral-gap operator (sketch rank $\nncgPrecondRank{}$), one line per inner
solver. \texttt{GlobalNystrom} overtakes \texttt{Nystrom} at solve $\nncgPrecondCrossover{}$ and
stays ahead, since its one-time full-operator sketch is amortised across every later free-block
solve rather than repaid on each one.}
\label{fig:nncg_precond}
\end{figure}